\newtheorem{definition}{Definition}
\newtheorem{theorem}{Theorem}%[section]
\newtheorem{lemma}[theorem]{Lemma}
\newtheorem{corollary}[theorem]{Corollary}
\newtheorem{example}{Example}
\newcommand{\R}{\mathbb{R}}
\newcommand{\cE}{\mathcal{E}}
\newcommand{\cV}{\mathcal{V}}
\newcommand{\vect}{\text{vec}}
\newcommand{\bpmat}{\begin{pmatrix}}
\newcommand{\epmat}{\end{pmatrix}}
\newcommand{\Symn}{\mathcal{S}^{n \times n}}
\newcommand{\bx}{\bm{x}}
\newcommand{\bT}{\bm{T}}
\newcommand{\bZ}{\bm{Z}}
\newcommand{\bb}{\bm{b}}
\newcommand{\bz}{\bm{z}}
\newcommand{\bX}{\bm{X}}
\newcommand{\bY}{\bm{Y}}
\newcommand{\bE}{\bm{E}}
\newcommand{\bF}{\bm{F}}
\newcommand{\bB}{\bm{B}}
\newcommand{\bR}{\bm{R}}
\newcommand{\bQ}{\bm{Q}}
\newcommand{\bG}{\bm{G}}
\newcommand{\bK}{\bm{K}}
\newcommand{\bH}{\bm{H}}
\newcommand{\bD}{\bm{D}}
\newcommand{\bA}{\bm{A}}
\newcommand{\bU}{\bm{U}}
\newcommand{\bLambda}{\bm{\Lambda}}
\newcommand{\bnabla}{\bm{\nabla}}
\newcommand{\Diag}{\operatorname{Diag}}
\newcommand{\trace}{\operatorname{tr}}
\DeclareMathOperator*{\argmin}{argmin}
\newcommand{\Rmnum}[1]{\uppercase\expandafter{\romannumeral #1}} % Roman capital letter
\titleformat{\chapter}{\centering\Huge\bfseries}{Chapter \Rmnum{\thechapter} }{1em}{} % in book format
\title{Cubic-Regularized Newton for Spectral Constrained Matrix Optimization and its Application to Fairness }
\author{ Casey Garner\thanks{School of Mathematics, University of Minnesota (\href{mailto:garne214@umn.edu}{garne214@umn.edu}, \href{mailto:lerman@umn.edu}{lerman@umn.edu}) }
\hspace{1cm}
Gilad Lerman\footnotemark[1]
\hspace{1cm}
Shuzhong Zhang\thanks{Department of Industrial and Systems Engineering (\href{mailto:zhangs@umn.edu}{zhangs@umn.edu})}}
\date{\today}
\begin{document}

\maketitle

\vspace{-0.2in}
\begin{abstract}
{Matrix functions are utilized to rewrite smooth spectral constrained matrix optimization problems as smooth unconstrained problems over the set of symmetric matrices which are then solved via the cubic-regularized Newton method. A second-order chain rule identity for matrix functions is proven to compute the higher-order derivatives to implement cubic-regularized Newton, and a new convergence analysis is provided for cubic-regularized Newton for matrix vector spaces. We demonstrate the applicability of our approach by conducting numerical experiments on both synthetic and real datasets. In our experiments, we formulate a new model for estimating fair and robust covariance matrices in the spirit of the Tyler's M-estimator (TME) model and demonstrate its advantage.

\vspace{3mm}
    \noindent\textbf{Keywords:} 
matrix optimization; cubic-regularized Newton; matrix functions; Tyler's M-estimator.

\vspace{3mm}
    \noindent\textbf{MSC codes:} 
90C26, 15A16, 65K10, 68Q32
}
\end{abstract}

\maketitle

\section{Introduction}
The key focus in this paper is the utilization of non-linear matrix transformations to remodel spectrally constrained matrix optimization problems as unconstrained optimization problems over matrix vector spaces. The primary model we consider is,
\begin{align}\label{eqn:main_model}
\min&\;\; F(\bm{X}) \\
\text{s.t.}&\;\; \lambda_i(\bm{X}) \in \mathcal{I}, \;\; i=1,\hdots, n \nonumber \\
			&\;\; \bm{X} \in \Symn, \nonumber 
\end{align}
where $\Symn$ is the set of $n\times n$ real symmetric matrices, $F: \Symn \rightarrow \R$ is two-times continuously differentiable, $\mathcal{I} \subseteq \R$ is an interval, and $\lambda_i(\bX)$ is the $i^\text{th}$ largest eigenvalue of $\bX$. Many important models are captured by \eqref{eqn:main_model} including smooth optimization over symmetric positive definite and positive semidefinite matrices. In this paper we demonstrate how to rewrite \eqref{eqn:main_model} using matrix functions \cite{H08,HJ} as the unconstrained model, 
\begin{align}\label{eqn:unc_main_model}
\min&\;\; F(\bG(\bX)) \\
\text{s.t.}&\;\; \bX \in \Symn, \nonumber 
\end{align}
where $\bG: \Symn \rightarrow \Symn$ and $F\circ \bG : \Symn \rightarrow \R$ are both twice continuously differentiable. We then prove the necessary results for one to apply cubic-regularized Newton to the smooth non-convex model \eqref{eqn:unc_main_model}, and we apply the proposed solution procedure to solve a fair data science model for robust covariance matrix estimation. 

Nesterov and Polyak in \cite{Nes06} proved a global convergence result for a regularized Newton algorithm for an unconstrained, non-convex optimization model. The main step in their approach required solving the subproblem, 
$$
\min_{\bx \in \R^n} \; \bb^\top \bx + \frac{1}{2} \bx^\top \bm{A} \bx + \frac{M}{6}||\bx||^3,
$$
where $\bm{A} \in \mathcal{S}^{n \times n}$ and $M>0$. This cubic-regularized subproblem has proven extremely effective in numerous algorithms and has well-defined conditions ensuring global optimality \cite{Ct11a,cristofari2019global,griewank1981modification,Nes06}. From their work much research has spawned in the past decade and a half. Accelerated cubic-regularized Newton variants \cite{Nes08} and alterations handling constrained problems \cite{benson2014interior,cartis2012adaptive,nesterov2006cubic} have been developed. In recent years cubic-regularized Newton has been adapted to the matrix manifold setting \cite{Ag21,Jun18}.

The main contributions of this work add to the discussions around cubic-regularized Newton, matrix functions and fair data science models. First, we provide a detailed and systematic manner for rewriting \eqref{eqn:main_model} as an unconstrained matrix optimization model through the use of matrix functions. This enables higher-ordered methods such as cubic-regularized Newton to be applied to an equivalent unconstrained optimization model without the use of barrier functions or penalty approaches. Matrix functions have many uses in various areas of mathematics including: differential equations, Markov models, control theory and nonlinear matrix equations \cite{H08}; however, to the authors' knowledge, no discussion of utilizing matrix functions to handle spectral constraints for matrix optimization is present in the literature.

Second, we present a chain rule identity for functions of the form $F \circ \bG$ where $\bG: \cV \rightarrow \cV$ is a matrix function formed from an analytic scalar function (see Theorem \ref{thm:CR}). This identity enables us to compute the second-order directional derivative of $F\circ \bG$ in terms of the first and second-order directional derivatives of $F$ and $\bG$ which are more readily available. We are unaware of this identity in the literature. Derivative results exist for matrix functions \cite{H08,HJ,mathias1996chain}; however, they either do not discuss higher-order chain rule identities or they only present higher-order derivatives for maps taking scalars to matrices. In \cite{sendov2007higher} the authors prove a somewhat similar identity to ours, but for a very restricted class of  functions. 

Third, we provide a new analysis of the convergence of cubic-regularized Newton which directly extends Nesterov and Polyak's original analysis in \cite{Nes06} into the matrix vector space setting. Our analysis relies on vectorizing the matrix subproblem and studying its relevant properties. Thus, we avoid introducing manifold geometry as in \cite{Jun18}.

At last, another contribution of this paper is the Fair Tyler's M-estimator (Fair TME) model \eqref{eqn:fair_TME} we propose and solve in Section \ref{sec:experiments}. The Tyler's M-estimator model \cite{T87} seeks to estimate a robust covariance matrix for a dataset which might contain outliers. This model and solution approaches for it have been studied recently and extensively \cite{danon2022frank,D16,F20,G20,wiesel2015structured}. Additionally, in recent years the discussion of fair models in data science and machine learning has increased \cite{mehrabi2021survey} leading to the development of models like fair PCA \cite{samadi2018price}. The Fair TME model we develop seeks to find a robust covariance matrix that fairly represents different subgroups of a single dataset. We test our fair model on both synthetic and real datasets and demonstrate its superiority for fairness when compared to the standard TME model applied to an aggregation of the datasets.

The paper is organized in the following manner: Section \ref{sec:matrix_functions} defines matrix functions and demonstrates how they are utilized to obtain unconstrained optimization models; Section \ref{sec:matrix_calculus} provides an overview of the fundamentals of matrix calculus and presents a new second-order chain rule identity for matrix functions; Section \ref{sec:CRN_for_matrix_opt} presents the cubic-regularized Newton algorithm and states the global $\mathcal{O}(\epsilon^{-3/2})$ convergence and local superlinear convergence results in the matrix optimization setting; Section \ref{sec:implement} discusses the implementation of cubic-regularized Newton; Section \ref{sec:experiments} showcases the numerical experiments for the Fair TME model on synthetic and real data; Section \ref{sec:conclusions} concludes the main body of the paper and outlines future avenues of research, and Sections \ref{sec:Appendix_B}, \ref{sec:Appendix_C}, \ref{sec:Appendix_D} and \ref{sec:extra_FairTME_models} form the appendices and contain most of the proofs for the stated results. 

A few comments about notation; we use bold upper-case and lower-case letters for matrices and column vectors respectively. For $\bm{A}, \bB \in \R^{n\times n}$ we let $\langle \bm{A}, \bB \rangle := \text{Tr}(\bm{A}^\top \bB) = \text{Tr}(\bB^\top \bm{A})$ denote the trace inner product which induces the Frobenius norm, $\|\bm{A}\|_F := \sqrt{\langle \bm{A}, \bm{A} \rangle}$. The vectorization of a matrix is denoted by $\vect(\bm{A}) \in \R^{n^2}$ and is the vector formed by stacking the columns of $\bm{A}$. For $\bx \in \R^n$, $\Diag(\bm{x}) \in \R^{n\times n}$ is the diagonal matrix whose diagonal elements are the elements of $\bm{x}$; $\bm{I}_{d \times d} \in \R^{d \times d}$ denotes the identity matrix with the subscript suppressed when the dimension is clear from the context. We let $\Symn, \Symn_{+}, \Symn_{++}$ denote the set of $n\times n$ real symmetric, positive semidefinite and positive definite matrices respectively.  

%========================================================
%======================================================
\section{Obtaining Unconstrained Models via Matrix Functions}\label{sec:matrix_functions}
The focus of this section is defining matrix functions and demonstrating how they can be leveraged to rewrite the constrained optimization model \eqref{eqn:main_model} as an unconstrained model over a matrix vector space. Multiple generalizations of matrix functions exist in the literature \cite{bhatia2013matrix,hawkins1973generalized,H08,HJ, rinehart1955equivalence}
but for the purposes of this paper we focus on a simplified version of the definition provided in \cite{H08}. 

\begin{definition}\label{def:mat_func}
Let $F:\R \rightarrow \R$ be defined on the spectrum of $\bX \in \mathcal{S}^{n\times n}$ with spectral decomposition $\bX= \bQ \bLambda \bQ^\top$ with $\bLambda$ diagonal and $\bU$ orthogonal. Then, the matrix function defined by the scalar function $F$ is given by,
\begin{equation}\label{eqn:mat_func}
\bF(\bX) = \bQ \bF(\bLambda) \bQ^\top
\end{equation}
where $\bF(\bLambda):= \Diag(F(\Lambda_{11}), \hdots, F(\Lambda_{nn}))$. 
\end{definition}

As detailed in \cite{H08,HJ}, the value of $\bF(\bX)$ is independent of the spectral decomposition chosen for $\bX$ which removes the anxiety of operating with an ill-defined function. Additionally, and most crucially for the implementation of cubic-regularized Newton, we can compute the derivatives of matrix functions provided $F: \R \rightarrow \R$ is sufficiently differentiable. Higham (Chapter 3 of \cite{H08}) presents some results for the differential properties of matrix functions, and  Horn and Johnson (Section 6.6 in \cite{HJ}) establish a litany of general derivative formulas; however, their derivative computations become rather involved and computationally unappealing. It is useful to have the following derivative formulas from \cite{B05}. 

\begin{theorem}({\it Proposition 3.2 and Theorem 3.6 in \cite{B05}})\label{thm:primary_matrix_func_deriv}\; Let $F \in C^2(\R)$ and $\bX \in \mathcal{S}^{n\times n}$ with $\bX = \bQ \bLambda \bQ^\top$ where $\bLambda = \Diag(\lambda_1, \hdots, \lambda_n)$. Then, for all $\bH \in \Symn$ with $\bK := \bQ^\top \bH \bQ$,
\begin{align}
	DF(\bX)[\bH] &= \bQ \left( DF( \bLambda)[\bK] \right) \bQ^\top, \nonumber \\ \nonumber \\
	D^2F(\bX)[\bH,\bH] &= 2\bQ \left( D^2F(\bLambda)[\bK,\bK] \right) \bQ^\top, \nonumber 
\end{align}
where:
\[
DF(\bLambda)[\bK] = \sum_{i,j=1}^{n} F^{[1]}\left( \lambda_i, \lambda_j \right) \bE_{ii} \bK \bE_{jj},
\]
\[
D^2 F(\bLambda)[\bK,\bK] = \sum_{i,j,k=1}^{n} F^{[2]}\left(\lambda_i, \lambda_j,\lambda_k\right) \bE_{ii} \bK \bE_{jj} \bK \bE_{kk},
\]
with $\bE_{ii}$ having 1 at $(i,i)$ and 0 otherwise, and $F^{[\ell]}$ is the $\ell$-th order divided difference equation. 
\end{theorem}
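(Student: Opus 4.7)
My plan is to reduce to the case $\bX = \bLambda$ diagonal by exploiting the orthogonal equivariance of a primary matrix function, and then to compute the directional derivatives by Taylor-expanding $\bF(\bLambda + t\bK)$ in the scalar parameter $t$. The reduction follows immediately from Definition \ref{def:mat_func}: since $\bX + t\bH = \bQ(\bLambda + t\bK)\bQ^\top$ and $\bF$ commutes with orthogonal conjugation, one has $\bF(\bX + t\bH) = \bQ\,\bF(\bLambda + t\bK)\,\bQ^\top$. Differentiating in $t$ at $t=0$ commutes with conjugation by the constant $\bQ$, so both identities of the theorem reduce to computing $\frac{d}{dt}\bF(\bLambda + t\bK)|_{t=0}$ and $\frac{d^2}{dt^2}\bF(\bLambda + t\bK)|_{t=0}$ at the diagonal point.

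For polynomial scalar functions $F(z) = z^p$ the computation is direct. Multiplying out $(\bLambda + t\bK)^p$ yields
$$(\bLambda + t\bK)^p = \bLambda^p + t\!\sum_{m_1+m_2=p-1}\!\bLambda^{m_1}\bK\bLambda^{m_2} + t^2\!\!\sum_{m_1+m_2+m_3=p-2}\!\!\bLambda^{m_1}\bK\bLambda^{m_2}\bK\bLambda^{m_3} + O(t^3).$$
Inserting $\bLambda^m = \sum_i \lambda_i^m \bE_{ii}$ and collecting terms shows the first-order coefficient equals $\sum_{i,j}\bigl(\sum_{m=0}^{p-1}\lambda_i^m \lambda_j^{p-1-m}\bigr)\bE_{ii}\bK\bE_{jj}$, and the inner sum is precisely $F^{[1]}(\lambda_i,\lambda_j)$ for $F(z)=z^p$; this proves the first identity for monomials, hence by linearity for all polynomials. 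The same manipulation writes the coefficient of $t^2$ as $\sum_{i,j,k}\bigl(\sum_{m_1+m_2+m_3=p-2}\lambda_i^{m_1}\lambda_j^{m_2}\lambda_k^{m_3}\bigr)\bE_{ii}\bK\bE_{jj}\bK\bE_{kk}$, and the inner sum equals $F^{[2]}(\lambda_i,\lambda_j,\lambda_k)$ by the standard representation of the second-order divided difference of a monomial. Because $\frac{d^2}{dt^2}\bF(\bLambda+t\bK)|_{t=0}$ is twice the coefficient of $t^2$ in the Taylor expansion, the factor of $2$ in the stated identity emerges.

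The extension from polynomials to $F \in C^2(\R)$ goes by a standard uniform-approximation argument: on a compact interval containing the spectrum of $\bX$, approximate $F$ together with its first two derivatives uniformly by a polynomial sequence $P_n$, so that the divided differences $P_n^{[\ell]}$ converge uniformly to $F^{[\ell]}$ for $\ell=0,1,2$ on the spectrum. Passing to the limit in both sides of the monomial identities already proved finishes the argument.

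The main obstacle, in my view, is the handling of coincident eigenvalues in the divided-difference sums: the geometric-sum manipulation $\sum_{m=0}^{p-1}\lambda_i^m\lambda_j^{p-1-m} = (\lambda_i^p - \lambda_j^p)/(\lambda_i-\lambda_j)$ needs a separate treatment when $\lambda_i = \lambda_j$, and similarly for three coincident arguments, where the limiting values $F'(\lambda_i)$ and $\tfrac12 F''(\lambda_i)$ must be recovered. This is conceptually routine using the recursive definition of divided differences, but it is the place where one must be most careful. A clean alternative, valid when $F$ is analytic, sidesteps the issue entirely by using the contour representation $\bF(\bX) = \frac{1}{2\pi i}\oint F(z)(z\bm{I} - \bX)^{-1}\,dz$: differentiating $(z\bm{I} - \bX - t\bH)^{-1}$ twice in $t$ produces the factor $2$ automatically, and expanding the resolvent in the eigenbasis of $\bLambda$ recovers the divided-difference formulas via residue calculus.
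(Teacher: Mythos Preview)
The paper does not supply its own proof of this theorem: it is stated as a quotation of Proposition~3.2 and Theorem~3.6 from \cite{B05}, without argument. Your proposal therefore cannot be compared against a proof in the paper, only assessed on its own merits.

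Your argument is sound and follows the natural route. The reduction to the diagonal case via orthogonal equivariance is immediate from Definition~\ref{def:mat_func}, and the polynomial computation is correct: the coefficient of $t$ in $(\bLambda+t\bK)^p$ is the Newton quotient sum $\sum_{m_1+m_2=p-1}\bLambda^{m_1}\bK\bLambda^{m_2}$, whose $(i,j)$ block is the complete homogeneous polynomial $h_{p-1}(\lambda_i,\lambda_j)$, which is precisely the first divided difference of $z^p$; the same pattern at second order gives $h_{p-2}(\lambda_i,\lambda_j,\lambda_k)=F^{[2]}(\lambda_i,\lambda_j,\lambda_k)$, and the factor $2$ arises exactly as you say, because the paper's symbol $D^2F(\bLambda)[\bK,\bK]$ is defined to be the $t^2$-coefficient rather than the second $t$-derivative. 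The $C^2$ extension by simultaneous polynomial approximation of $F$, $F'$, $F''$ on a compact interval containing the spectrum is standard; the only care needed, which you correctly flag, is that the divided-difference expressions must be interpreted by continuity at coincident eigenvalues, and uniform $C^2$-approximation guarantees that both sides of the identity pass to the limit. The contour-integral alternative you mention is also a perfectly good route in the analytic case and is in fact the approach commonly taken in the matrix-function literature (e.g.\ \cite{H08}).
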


\noindent The purpose of Theorem \ref{thm:primary_matrix_func_deriv} will become clear in the next subsection where we detail how to rewrite the constrained optimization model \eqref{eqn:main_model}. This result enables a closed form manner for writing the directional derivatives of the matrix functions which must be computed in-order to apply cubic-regularized Newton to solve the unconstrained model.

\subsection{Writing the Unconstrained Model} 
The role of matrix functions in this paper are to enforce the spectral constraints in \eqref{eqn:main_model}. All forms of interval constraints on the eigenvalues can be captured via matrix functions. Theorem \ref{thm:transform_problem} provides at least one possible method for rewriting \eqref{eqn:main_model} for various intervals $\mathcal{I}\subseteq \R$.

\begin{theorem}\label{thm:transform_problem}
Let $F:\Symn \rightarrow \R$ be in $C^2(\Symn)$ and assume there exists,
\[
\bX^* \in \argmin\left\{ F(\bX)\; | \; \lambda_j(\bX) \in \mathcal{I}_k,\; j=1,\hdots, n \;,\; \bX \in \Symn \right\},\]
then there exists, 
\[\bU^* \in \argmin\left\{ F(\bG_k(\bU))\; | \; \bU \in \Symn\right\},\]
such that $F(\bX^*) = F(\bG_k(\bU^*))$ where $\mathcal{I}_k$ and $\bG_k(\cdot)$ are given respectively by:
\begin{itemize}
\item $\mathcal{I}_1 = (\alpha, \;+\infty)$,\;$\bG_1(\bX) = e^{\bX} + \alpha \bm{I}$
\vspace{0.1in}
\item $\mathcal{I}_2 = (-\infty, \alpha)$,\;$\bG_2(\bX) = -e^{\bX} + \alpha \bm{I}$
\vspace{0.1in}
\item $\mathcal{I}_3 = [\alpha, \;+\infty)$,\; $\bG_3(\bX) = \bX^2 + \alpha \bm{I}$
\vspace{0.1in}
\item $\mathcal{I}_4 = (-\infty,\alpha]$,\; $\bG_4(\bX) = -\bX^2 + \alpha \bm{I}$
\vspace{0.1in}
\item $\mathcal{I}_5 = (\alpha, \beta]$,\; $\bG_5(\bX) = (\beta-\alpha) \left( \bm{I} + \bX^2\right)^{-1} + \alpha \bm{I}$ 
\vspace{0.1in}
\item $\mathcal{I}_6 = [\alpha, \beta)$ and $\bG_6(\bX) =  (\alpha-\beta) \left( \bm{I} + \bX^2\right)^{-1} + \beta \bm{I}$ 
\vspace{0.1in}
\item $\mathcal{I}_7 = (\alpha, \beta)$ and $\bG_7(\bX) = \bQ \bG_7(\bD) \bQ^\top$ with $G_7(x) = \left(\frac{\beta-\alpha}{\pi}\right) \text{arctan}(x) + \frac{\beta+\alpha}{2}$ 
\end{itemize}
and $\alpha, \beta \in \R$ with $\alpha < \beta$. 
\end{theorem}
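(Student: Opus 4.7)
The plan is to reduce the matrix statement to a surjectivity property of the underlying scalar functions $G_k : \R \to \R$ and then lift back to the matrix setting via Definition~\ref{def:mat_func}. The key observation is that if $\bU \in \Symn$ has spectral decomposition $\bU = \bQ \Diag(u_1,\ldots,u_n) \bQ^\top$, then Definition~\ref{def:mat_func} yields $\bG_k(\bU) = \bQ \Diag(G_k(u_1),\ldots,G_k(u_n)) \bQ^\top$, so the eigenvalues of $\bG_k(\bU)$ are exactly $G_k(u_1),\ldots,G_k(u_n)$. Consequently $\{\bG_k(\bU) : \bU \in \Symn\}$ is precisely the set of symmetric matrices whose spectrum lies in $G_k(\R)$, and the entire theorem collapses to checking $G_k(\R) = \mathcal{I}_k$ for each $k = 1,\ldots,7$.

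First, I would verify surjectivity of each scalar $G_k$ onto $\mathcal{I}_k$ by direct calculus. The exponential cases $G_1(x) = e^x + \alpha$ and $G_2(x) = -e^x + \alpha$ follow from $e^x(\R) = (0,+\infty)$, and the quadratic cases $G_3, G_4$ from $x^2(\R) = [0,+\infty)$. For $G_5$, since $1/(1+x^2)$ sweeps $(0,1]$ as $x$ ranges over $\R$, the image of $G_5$ is $(\alpha,\beta]$; symmetrically $G_6(\R) = [\alpha,\beta)$. For $G_7$, the image of $\arctan$ is $(-\pi/2,\pi/2)$, which the affine rescaling in the definition of $G_7$ sends onto $(\alpha,\beta)$.

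Next, I would combine the above to establish the variational equivalence in both directions. Since every $\bG_k(\bU)$ is feasible for the constrained model, $F(\bG_k(\bU)) \geq F(\bX^*)$ for all $\bU \in \Symn$. Conversely, given the constrained optimizer $\bX^* = \bQ^* \bLambda^* (\bQ^*)^\top$ with $\bLambda^* = \Diag(\lambda_1^*,\ldots,\lambda_n^*)$ and $\lambda_j^* \in \mathcal{I}_k$, surjectivity of $G_k$ supplies some $u_j^* \in G_k^{-1}(\lambda_j^*)$ for each $j$, and setting $\bU^* := \bQ^* \Diag(u_1^*,\ldots,u_n^*)(\bQ^*)^\top$ yields $\bG_k(\bU^*) = \bX^*$ by Definition~\ref{def:mat_func}. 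Hence $F(\bG_k(\bU^*)) = F(\bX^*)$, so $\bU^*$ attains the unconstrained infimum and realizes the claimed equality of optimal values.

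I do not expect a substantial obstacle. The one subtlety worth flagging is that, in the rational cases $\bG_5$ and $\bG_6$, one should check that the matrix-arithmetic expression $(\bm{I} + \bX^2)^{-1}$ agrees with the primary matrix function induced by the scalar $1/(1+x^2)$. Because $\bm{I} + \bX^2 \succ \bm{0}$ for every $\bX \in \Symn$, its matrix inverse is well defined and shares the eigenvectors of $\bX$, so the two interpretations coincide and the spectral bookkeeping proceeds uniformly across all seven cases.
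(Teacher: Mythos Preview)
Your proposal is correct and follows essentially the same route as the paper: both arguments reduce the theorem to showing that each map $\bG_k$ is surjective from $\Symn$ onto the feasible set $\{\bX\in\Symn:\lambda_j(\bX)\in\mathcal{I}_k\}$, which in turn follows from the surjectivity of the underlying scalar function $G_k$ onto $\mathcal{I}_k$. Your write-up is in fact more explicit than the paper's, spelling out the two-sided variational inequality and the well-definedness of $(\bm{I}+\bX^2)^{-1}$, whereas the paper simply asserts the scalar surjectivity (illustrated for $G_1$) and cites smoothness from the matrix-function literature.
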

\begin{proof}{
To prove this result, we only need to demonstrate that the mappings, 
\[
\bG_k: \Symn \rightarrow \left\{ \bX \in \Symn \;| \;  \lambda_i(\bX) \in \mathcal{I}_k,\; j=1,\hdots, n \right\}
\]
are smooth and surjective. Since all of the scalar functions which define $\bG_k$ are smooth it follows by the results of \cite{B05,H08,HJ} that $\bG_k$ is smooth. 
The surjectivity of the maps follow from the surjectivity of the scalar functions 
which define $\bG_k$ onto $\mathcal{I}_k$. 
For example, the scalar function defining $\bG_1$ is 
$G_1(x) = e^x + \alpha$ and it is smooth and surjective on $\mathcal{I}_1$.}  
\end{proof}

%=========================================================
\section{Matrix Calculus and a Second-Order Chain Rule Identity}\label{sec:matrix_calculus}
Applying higher-order methods such as cubic-regularized Newton requires second-order derivative information. Thus, we must define gradient and Hessian operators as well as directional derivatives for real-valued matrix functions $F: \mathcal{V} \rightarrow \R$ where $\mathcal{V} \subseteq \mathbb{R}^{n \times n}$ is a matrix vector space. 

\begin{definition} The matrix gradient for $F:\mathcal{V} \rightarrow \R$ is a function $\bm{\nabla}F: \mathcal{V} \rightarrow \mathcal{V}$ whose elements are defined as, 
\[
\left( \bnabla F(\bX) \right)_{ij}:= \frac{\partial F}{\partial X_{ij}}(\bX), \;\;\; \forall \; i,j \in \{1,2\hdots, n\},
\]
provided all the partial derivatives exist. 
\end{definition}

\noindent We further define the directional derivative of a real-valued matrix function, and relate it to the matrix gradient in the expected manner.

\begin{definition}\label{def:dir_deriv_mat_func} The directional derivative of $F:\mathcal{V} \rightarrow \R$ at $\bX \in \mathcal{V}$ in the direction of $ \bH\in \mathcal{V}$, denoted $DF(\bX)[\bH]$, is defined as, 
\begin{equation}\label{eqn:dir_deriv_dt}
DF(\bX)[\bH] := \frac{d}{dt} \left( F(\bX + t\bH) \right) \big|_{t=0}.
\end{equation}
\end{definition}

\noindent As in the Euclidean setting, the directional derivatives of a real-valued matrix function relates closely to its gradient, 
\begin{equation}\label{eqn:dir_deriv_grad}
DF(\bX)[\bH] = \langle \bm{\nabla} F(\bX), \bH \rangle.
\end{equation}
Note, Definition \ref{def:dir_deriv_mat_func} could have been replaced with the more general Fr\'echet derivative of $F$ where ${DF}(\bX): \mathcal{V}\rightarrow \R$ is a linear operator and,  
\[
\lim_{\bH \rightarrow \bm{0}} \frac{|F(\bX+\bH) - F(\bX) - DF(\bX)[\bH]|}{\|\bH\|_F}  = 0. 
\] 
Moving to higher-order derivatives, we define the Hessian of a real-valued matrix function. 
\begin{definition}The Hessian of $F:\mathcal{V} \rightarrow \R$ at $\bX \in \mathcal{V}$ is a function $\bm{\nabla^2} F(\bX)[\cdot]: \mathcal{V} \rightarrow \mathcal{V}$ where, 
\begin{equation}\label{eqn:mat_hess}
\bm{\nabla^2} F(\bX)[\bH] := \frac{d}{dt} \left( \bm{\nabla} F(\bX + t\bH) \right) \big|_{t=0}.
\end{equation}
\end{definition}
\noindent The second-order directional derivative of $F$ at $\bX \in \mathcal{V}$, $D^2 F(\bX)[\cdot, \cdot] : \mathcal{V} \times \mathcal{V} \rightarrow \R$, can then be defined through the Hessian of $F$ at $\bX$ as, 
\begin{equation}\label{eqn:2nd_order_DD}
D^2 F(\bX)[\bH_1, \bH_2] := \langle \bm{\nabla^2 F}(\bX)[\bH_1], \bH_2 \rangle.
\end{equation}
It is important to note $\bm{\nabla^2} F(\bX)$ is uniquely defined and $D^2 F(\bX)$ is a bi-linear, symmetric operator in $\bH_1$ and $\bH_2$. Using this differential information one can present a matrix version of Taylor's Theorem, 
\[
F(\bX + \bH ) = F(\bX) + DF(\bX)[\bH] + \frac{1}{2}D^2 F(\bX)[\bH,\bH] + \mathcal{O}(\|\bH\|_F^3).
\]
Additionally, we can define an operator norm on the second-order matrix derivative of $F$. 
\begin{definition}
The operator norm of $D^2 F(\bX)$ over the matrix vector space $\cV$ is given as, 
\[
\|D^2 F(\bX)\| := \max\left\{ | D^2 F(\bX)[\bH_1, \bH_2]| \; \big| \; \|\bH_1\|_F = \|\bH_2\|_F = 1, \; \bH_1, \bH_2 \in \mathcal{V} \right\}.
\]
\end{definition}

\noindent Using this norm, we introduce the notion of a Lipschitz Hessian which plays a crucial role in the convergence analysis of the cubic-regularized Newton algorithm.  

\begin{definition}
The function $F: \mathcal{V} \rightarrow \R$ has Lipschitz Hessian with parameter $L>0$ if,
\[
\| D^2 F(\bX) - D^2 F(\bY) \| \leq L \|\bX - \bY \|_F,\; \;\; \forall \; \bX, \bY \in \mathcal{V}. 
\]
\end{definition} 
A couple of key inequalities follow from assuming a Lipschitz Hessian. Lemma \ref{lem:hess_lip}, which is proved in the Supplemental Materials, is crucial in Appendix \ref{sec:Appendix_D}.
\begin{lemma}\label{lem:hess_lip}
If the Hessian of $F: \mathcal{V} \rightarrow \R$ is Lipschitz continuous on $\mathcal{V}$ with parameter $L>0$, then for all $\bX,\bY \in \mathcal{V}$, 
\begin{equation}\label{eqn:lip_ineq_1}
\| \bm{\nabla} F(\bX) - \bm{\nabla} F(\bY) - \bm{\nabla^2} F(\bX)[\bY-\bX] \| _F \leq \frac{L}{2} \| \bX - \bY \|_F^2,
\end{equation}
\begin{equation}\label{eqn:lip_ineq_2}
| F(\bY) - F(\bX)- \langle \bm{\nabla} F(\bX), \bY-\bX \rangle  - \frac{1}{2}\langle \bm{\nabla^2} F(\bX)[\bY-\bX], \bY-\bX \rangle| \leq \frac{L}{6} \| \bX - \bY \|_F^3.
\end{equation}
\end{lemma}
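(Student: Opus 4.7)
The plan is to mimic the standard Euclidean proof of these two Lipschitz-Hessian consequences, adapting the bookkeeping to the matrix vector space $\mathcal{V}$ equipped with $\langle \cdot,\cdot\rangle$ and $\|\cdot\|_F$. Since $\mathcal{V}$ is finite-dimensional, the fundamental theorem of calculus applies componentwise to the gradient map $\bnabla F:\mathcal{V}\to\mathcal{V}$, giving
\[
\bnabla F(\bY)-\bnabla F(\bX)=\int_0^1 \bnabla^2 F(\bX+t(\bY-\bX))[\bY-\bX]\,dt,
\]
and analogously the scalar Taylor formula with integral remainder
\[
F(\bY)-F(\bX)-\langle \bnabla F(\bX),\bY-\bX\rangle = \int_0^1 (1-t)\,\langle \bnabla^2 F(\bX+t(\bY-\bX))[\bY-\bX],\bY-\bX\rangle\,dt.
\]
A preliminary step I would state and dispose of quickly is that the operator norm dominates the action on a single direction, namely $\|\bnabla^2 F(\bX)[\bH]\|_F \le \|D^2 F(\bX)\|\cdot \|\bH\|_F$, obtained by choosing $\bH_2 = \bnabla^2 F(\bX)[\bH]/\|\bnabla^2 F(\bX)[\bH]\|_F$ in the definition of $\|D^2F(\bX)\|$ together with the identity \eqref{eqn:2nd_order_DD}.

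For inequality \eqref{eqn:lip_ineq_1}, I would subtract $\bnabla^2 F(\bX)[\bY-\bX]$ from the first display, pull the constant term inside the integral, and obtain
\[
\bnabla F(\bY)-\bnabla F(\bX)-\bnabla^2 F(\bX)[\bY-\bX]=\int_0^1\bigl(\bnabla^2 F(\bX+t(\bY-\bX))-\bnabla^2 F(\bX)\bigr)[\bY-\bX]\,dt.
\]
Then apply the triangle inequality for $\mathcal{V}$-valued Riemann integrals, the preliminary operator-norm step above, and the Lipschitz Hessian hypothesis, giving the bound $\int_0^1 Lt\|\bY-\bX\|_F^2\,dt=\tfrac{L}{2}\|\bY-\bX\|_F^2$.

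For inequality \eqref{eqn:lip_ineq_2}, I would use the identity $\int_0^1(1-t)\,dt=\tfrac{1}{2}$ to rewrite $\tfrac{1}{2}\langle \bnabla^2 F(\bX)[\bY-\bX],\bY-\bX\rangle$ as $\int_0^1(1-t)\langle \bnabla^2 F(\bX)[\bY-\bX],\bY-\bX\rangle\,dt$, subtract from the Taylor remainder, and estimate via Cauchy--Schwarz and the operator-norm Lipschitz bound. The resulting integral is $\int_0^1 (1-t)\,Lt\,\|\bY-\bX\|_F^3\,dt = L\|\bY-\bX\|_F^3\int_0^1 t(1-t)\,dt=\tfrac{L}{6}\|\bY-\bX\|_F^3$.

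The only mild obstacle is that the argument relies on an $\mathcal{V}$-valued integral and the inequality $\|\int_0^1 \bm{\Phi}(t)\,dt\|_F\le \int_0^1\|\bm{\Phi}(t)\|_F\,dt$ for continuous matrix-valued $\bm{\Phi}$. Because $\mathcal{V}$ is finite-dimensional and the integrand is continuous, this reduces (via $\vect(\cdot)$) to the standard fact for continuous vector-valued integrals in $\R^{\dim\mathcal{V}}$, so no new machinery is needed; this is the only place where working on $\mathcal{V}$ rather than $\R^d$ requires a sentence of justification, and it is the step I would take most care to state precisely.
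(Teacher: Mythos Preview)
Your proof is correct and, for \eqref{eqn:lip_ineq_1}, follows exactly the paper's argument: the same fundamental-theorem-of-calculus identity for $\bnabla F$, the same subtraction of $\bnabla^2 F(\bX)[\bY-\bX]$, and the same integral $\int_0^1 Lt\,\|\bY-\bX\|_F^2\,dt$.

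For \eqref{eqn:lip_ineq_2} the two proofs diverge slightly in the integral representation chosen. The paper writes the first-order Taylor remainder
\[
F(\bY)-F(\bX)=\int_0^1 \langle \bnabla F(\bX+t(\bY-\bX)),\bY-\bX\rangle\,dt,
\]
subtracts the second-order quadratic, and then bounds the integrand by Cauchy--Schwarz together with the \emph{already-proven} inequality \eqref{eqn:lip_ineq_1} applied at the intermediate point $\bX+t(\bY-\bX)$, yielding $\int_0^1 \tfrac{L}{2}t^2\|\bY-\bX\|_F^3\,dt$. You instead use the second-order integral remainder with weight $(1-t)$ and invoke the Lipschitz-Hessian hypothesis directly, giving $\int_0^1 t(1-t)L\|\bY-\bX\|_F^3\,dt$. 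Both integrals equal $\tfrac{L}{6}\|\bY-\bX\|_F^3$, and both are entirely standard. The paper's route has the minor economy of reusing \eqref{eqn:lip_ineq_1} rather than restating the operator-norm bound, while yours keeps the two inequalities logically independent; neither buys anything substantive over the other.
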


Since $\bm{\nabla ^2} F(\bX)[\bH]$ is a linear function in $\bH$, it follows there exists $\bm{A}(\bX) \in \R^{n^2 \times n^2}$ such that,
\begin{equation}\label{eqn:hess_vec}
\text{vec}\left( \bm{\nabla ^2} F(\bX)[\bH] \right) = \bm{A}(\bm{X})\vect(\bH).
\end{equation}
We will subsequently refer to $\bm{A}(\bX)$ as the vectorization of the Hessian, and \eqref{eqn:hess_vec} will play a fundamental role in our new analysis for cubic-regularized Newton in Section \ref{sec:Appendix_D}. For more details on matrix derivatives and directional derivatives, the reader is directed to \cite{A09,HJ,Lee}.

A final important result for implementing cubic-regularized Newton is the chain rule for the composition of functions which appears in \eqref{eqn:unc_main_model}. 

\begin{theorem}\label{thm:CR}
Let $\mathcal{V} \subseteq \R^{n \times n}$ be a vector space, $F: \mathcal{V} \rightarrow \R$ and $\bG: \mathcal{V} \rightarrow \mathcal{V}$ be any matrix function defined by an analytic scalar function. Let $\bX, \bH \in \mathcal{V}$ and assume $F$ is differentiable at $\bG(\bX)$ and $\bG$ is differentiable at $\bX$. Then, assuming $F$ and $\bG$ are two-times continuously differentiable, the first and second-order directional derivatives of $F \circ \bG$ at $\bX$ in the direction of $\bH$ are,
\begin{equation}\label{eqn:CR_1st}
D(F\circ \bG)(\bX)[\bH] = DF(\bG(\bX))\bigg[ D\bG(\bX)[\bH] \bigg],
\end{equation} 
\begin{multline}\label{eqn:CR_2nd}
D^2(F\circ \bG)(\bX)[\bH,\bH] = \\ D^2F(\bG(\bX))\bigg[ D\bG(\bX)[\bH], D\bG(\bX)[\bH]\bigg] + DF(\bG(\bX))\bigg[D^2 \bG(\bX)[\bH,\bH]\bigg].
\end{multline} 
\end{theorem}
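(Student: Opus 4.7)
The plan is to reduce the statement to one-variable scalar calculus along the line $t \mapsto \bX + t\bH$, use the ordinary (first-order) Fréchet chain rule once for (\ref{eqn:CR_1st}), and then apply the product rule to an inner-product expression for $\phi'(t) := \frac{d}{dt} F(\bG(\bX + t\bH))$ to obtain (\ref{eqn:CR_2nd}).

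First I would establish (\ref{eqn:CR_1st}). Since $\mathcal{V}$ is a finite-dimensional inner product space and both $F$ and $\bG$ are $C^2$, both maps are Fréchet differentiable, and Definition \ref{def:dir_deriv_mat_func} coincides with the Fréchet derivative applied to the direction $\bH$. Setting $\phi(t) := F(\bG(\bX+t\bH))$, the standard Fréchet chain rule for the composition $t \mapsto \bX + t\bH \mapsto \bG(\bX + t\bH) \mapsto F(\bG(\bX + t\bH))$ gives $\phi'(0) = DF(\bG(\bX))[D\bG(\bX)[\bH]]$, which is exactly (\ref{eqn:CR_1st}).

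For (\ref{eqn:CR_2nd}), I would first note that $D^2(F \circ \bG)(\bX)[\bH,\bH] = \phi''(0)$, which follows from (\ref{eqn:2nd_order_DD}) together with a direct Taylor expansion of $F \circ \bG$ in $t$. This reduces the target identity to computing $\phi''(0)$. Rewriting $\phi'(t) = \langle \bnabla F(\bG(\bX + t\bH)), D\bG(\bX + t\bH)[\bH] \rangle$ via (\ref{eqn:dir_deriv_grad}), I would differentiate the inner product using the product rule and evaluate at $t = 0$. Applying the first-order chain rule already proved to the map $\bnabla F \circ \bG$, the derivative of the first factor at $t = 0$ equals $\bnabla^2 F(\bG(\bX))[D\bG(\bX)[\bH]]$; by $C^2$-regularity of $\bG$, the derivative of the second factor at $t = 0$ equals $D^2 \bG(\bX)[\bH,\bH]$. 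Pairing these with the corresponding un-differentiated factors through the trace inner product and using (\ref{eqn:2nd_order_DD}) and (\ref{eqn:dir_deriv_grad}) one more time recovers precisely the two summands on the right-hand side of (\ref{eqn:CR_2nd}).

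The main obstacle is justifying that $\bG$ really is Fréchet $C^2$ as a map $\mathcal{V} \to \mathcal{V}$, so that the chain-rule and product-rule manipulations above are rigorous rather than merely formal; here the hypothesis that $\bG$ is built from an analytic scalar function is essential, and Theorem \ref{thm:primary_matrix_func_deriv} together with the smoothness results cited from \cite{B05,H08,HJ} supplies the needed regularity of $D\bG$ and $D^2\bG$. A minor care point is that the second step uses the chain rule for a matrix-valued map ($\bnabla F$ composed with $\bG$) rather than a scalar one; this extends by the same one-variable curve argument applied componentwise in any fixed orthonormal basis of $\mathcal{V}$, so no new ideas are needed beyond (\ref{eqn:CR_1st}) itself.
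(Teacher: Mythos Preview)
Your argument is correct and considerably more streamlined than the paper's. The paper does not use the one-variable reduction $\phi(t)=F(\bG(\bX+t\bH))$ followed by the inner-product product rule. Instead, it argues that since $\bG$ comes from an analytic scalar function it suffices to verify \eqref{eqn:CR_2nd} for arbitrary matrix polynomials $\bG(\bX)=\sum_i a_i\bX^i$; it then derives explicit closed-form expressions for $D\bG(\bX)[\bH]$ and $D^2\bG(\bX)[\bH,\bH]$ in this polynomial case, computes $\bnabla(F\circ\bG)$ and $\bnabla^2(F\circ\bG)$ term by term, and finally matches the resulting sums against the right-hand side of \eqref{eqn:CR_2nd} by manipulating indices and trace identities. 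Your route bypasses all of this combinatorics: once $\bG$ is known to be $C^2$ (which, as you note, is the real content of the analyticity hypothesis and is supplied by Theorem~\ref{thm:primary_matrix_func_deriv} and the cited references), the identity is just the Fa\`a di Bruno--type formula that falls out of differentiating $\phi'(t)=\langle\bnabla F(\bG(\bX+t\bH)),D\bG(\bX+t\bH)[\bH]\rangle$ once more. What the paper's approach buys is concrete polynomial formulas for $D\bG$ and $D^2\bG$ that may be useful elsewhere; what yours buys is a proof that works verbatim for any $C^2$ map $\bG:\mathcal{V}\to\mathcal{V}$, not only those arising from analytic scalar functions, and that makes transparent why the two summands in \eqref{eqn:CR_2nd} appear.
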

It is worth stating \eqref{eqn:CR_1st} holds true for general vector spaces, and this identity is a standard result (see e.g., \cite{A09,Lee}). On the other hand, we were unable to locate \eqref{eqn:CR_2nd} in the literature with the general setting of matrix functions. We thus prove it in Section \ref{sec:Appendix_C}. 

%=============================================================
\section{Cubic-Regularized Newton for Matrix Optimization}\label{sec:CRN_for_matrix_opt}
We introduce the cubic regularized Newton method for the unconstrained matrix optimization problem,
\vspace{-0.1in}
\begin{align}\label{eqn:crn_prob}
\min&\;\; F(\bX) \\
\text{s.t.}&\;\; \bX \in \mathcal{V}, \nonumber 
\end{align}
where we assume: $\mathcal{V} \subset \R^{n\times n}$ is a $d$-dimensional subspace, $F \in C^2(\mathcal{V})$ has Lipschitz Hessian with parameter $L>0$, and there exists finite $F^*$ such that $F^* \leq F(\bX)$ for all $\bX \in \mathcal{V}$. 

In our discussion of convergence, we restrict ourselves to $\epsilon$-stationary points due to the potential non-convexity of \eqref{eqn:crn_prob} which prohibits us from claiming global optimal solutions to our general matrix optimization model. The point  $\bX^* \in \mathcal{V}$ is a second-order $\epsilon$-stationary point of \eqref{eqn:crn_prob} with $\epsilon > 0$ if $\|\bnabla F(\bX^*)\|_F \leq \epsilon$,
and $\langle \bnabla^2 F(\bX^*)[\bH], \bH \rangle \geq -\sqrt{\epsilon} \|\bH\|_F^2$\; $\forall \bH \in\mathcal{V}$.

Algorithm \ref{alg:CRN} sketches the cubic-regularized Newton method. This method has two key attributes, which are encapsulated in the theorems below: (1) global convergence to second-order $\epsilon$-stationary points and (2) local superlinear convergence near non-degenerate local minimum. 

%Extension of Theorem 1 in \cite{Nes06})
\begin{theorem}\label{thm:global_convergence_CRN} Assume $F \in C^2(\mathcal{V})$ has Lipschitz Hessian with parameter $L>0$ and is bounded below by $F^*$ on $\mathcal{V}$. Then the sequence $\{\bX_k\}$ generated by Algorithm \ref{alg:CRN} converges to a second-order $\epsilon$-stationary point for \eqref{eqn:crn_prob} with rate $\mathcal{O}(\epsilon^{-3/2})$. 
\end{theorem}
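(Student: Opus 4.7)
The plan is to mimic Nesterov and Polyak's original analysis from \cite{Nes06} but after first passing to a vectorized representation of the subproblem, as hinted in the introduction. Since $\mathcal{V}$ is a $d$-dimensional subspace of $\R^{n\times n}$, fix an orthonormal basis $\{\bE_1,\dots,\bE_d\}$ of $\mathcal{V}$ under the Frobenius inner product and identify each $\bH \in \mathcal{V}$ with its coordinate vector $\bh\in\R^d$ via $\bH=\sum_{i=1}^d h_i \bE_i$, so that $\|\bH\|_F=\|\bh\|_2$. Under this identification the gradient $\bnabla F(\bX_k)$ corresponds to a vector $\bg_k\in\R^d$ and the Hessian operator $\bnabla^2 F(\bX_k)[\cdot]$ (restricted to $\mathcal{V}$) corresponds to a symmetric matrix $\bH_k\in\mathcal{S}^{d\times d}$; this is where \eqref{eqn:hess_vec} is invoked, combined with orthogonal projection onto $\mathcal{V}$. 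The cubic-regularized subproblem solved at step $k$ of Algorithm \ref{alg:CRN} becomes the standard Euclidean subproblem
\begin{equation*}
\min_{\bh\in\R^d}\; \bg_k^\top \bh + \tfrac{1}{2}\bh^\top \bH_k \bh + \tfrac{M}{6}\|\bh\|_2^3,
\end{equation*}
which is precisely the model of \cite{Nes06}, so its global minimizer $\bh_{k+1}$ exists and satisfies the first- and second-order optimality conditions $\bH_k\bh_{k+1}+\bg_k+\tfrac{M}{2}\|\bh_{k+1}\|_2\,\bh_{k+1}=\bm{0}$ and $\bH_k+\tfrac{M}{2}\|\bh_{k+1}\|_2\,\bm{I}\succeq \bm{0}$.

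Next I would establish the three Nesterov--Polyak workhorse estimates in the matrix setting. Using Lemma \ref{lem:hess_lip}, inequality \eqref{eqn:lip_ineq_2} gives the sufficient decrease
\begin{equation*}
F(\bX_k)-F(\bX_{k+1})\;\geq\; \tfrac{M-L/3}{6}\|\bX_{k+1}-\bX_k\|_F^3
\end{equation*}
once $M\geq L$ and the cubic model overestimates $F$. Using \eqref{eqn:lip_ineq_1} together with the first-order optimality condition gives the gradient bound
\begin{equation*}
\|\bnabla F(\bX_{k+1})\|_F \;\leq\; \tfrac{L+M}{2}\|\bX_{k+1}-\bX_k\|_F^2,
\end{equation*}
and combining the Lipschitz Hessian definition with the second-order optimality condition (after undoing the vectorization and noting that the operator norm on $\mathcal{V}$ agrees with the $\ell_2\to\ell_2$ operator norm of $\bH_k$ by orthonormality of the basis) yields
\begin{equation*}
\langle \bnabla^2 F(\bX_{k+1})[\bH],\bH\rangle \;\geq\; -\bigl(L+\tfrac{M}{2}\bigr)\|\bX_{k+1}-\bX_k\|_F\|\bH\|_F^2,\qquad \forall\,\bH\in\mathcal{V}.
\end{equation*}

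With these in hand the convergence rate follows by a bookkeeping argument identical to the one in \cite{Nes06}: if $\bX_{k+1}$ is not an $\epsilon$-second-order stationary point, then the two inequalities above force $\|\bX_{k+1}-\bX_k\|_F \geq c\sqrt{\epsilon}$ for some constant $c$ depending on $L,M$, which via the sufficient decrease yields $F(\bX_k)-F(\bX_{k+1})\geq c'\epsilon^{3/2}$. Telescoping against the lower bound $F^*$ caps the number of ``bad'' iterations at $\mathcal{O}((F(\bX_0)-F^*)\epsilon^{-3/2})$, which is the claimed rate.

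The main obstacle I anticipate is not the bookkeeping itself but the reduction from the matrix space to $\R^d$: one has to verify that (i) the vectorization $\bA(\bX)$ from \eqref{eqn:hess_vec}, after restriction and symmetrization on the coordinates of $\mathcal{V}$, gives a genuinely symmetric matrix $\bH_k$ whose $\ell_2$ operator norm equals $\|\bnabla^2 F(\bX_k)\|$ in the operator norm defined on $\mathcal{V}$, and (ii) the Frobenius-Lipschitz Hessian hypothesis on $F$ translates into a spectral-norm Lipschitz bound on $\bH_k$ as a function of $\bX_k$ with the \emph{same} constant $L$. Both will follow from bilinearity of $D^2F$ and the orthonormality of the chosen basis, but this verification is where the proof requires care and is presumably the content of Section \ref{sec:Appendix_D}.
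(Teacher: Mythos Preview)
Your proposal is correct and follows essentially the same route as the paper. The paper carries out precisely the vectorization you describe (via a matrix $\bB\in\R^{n^2\times d}$ whose columns are an orthonormal basis of $\vect(\mathcal{V})$, giving $\bb_X=\bB^\top\vect(\bnabla F(\bX))$ and $\bA_X=\bB^\top\bA(\bX)\bB$), proves exactly the two transfer lemmas you flag as the main obstacle (symmetry of $\bA_X$ and $\|\bA_X-\bA_Y\|_2\le L\|\bX-\bY\|_F$), and then runs the Nesterov--Polyak decrease/gradient/Hessian estimates to telescope. The only cosmetic differences are that the paper packages your last two displayed inequalities into a single potential function $\mu_M(\bX)=\max\{\sqrt{2\|\bb_X\|/(L+M)},\,-2\lambda_{\min}(\bA_X)/(2L+M)\}$ and bounds $\mu_M(\bT_M(\bX))\le\|\bz_X\|$, and that the sufficient-decrease constant in the paper comes out as $M/4-L/6$ (hence $L/12$ when $M\ge L$) rather than your $(M-L/3)/6$; you should double-check that constant, but it does not affect the $\mathcal{O}(\epsilon^{-3/2})$ conclusion.
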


%(Extension of Theorem 3 in \cite{Nes06})
\begin{theorem}\label{thm:superlinear_CRN} If Algorithm \ref{alg:CRN} is initialized sufficiently close to a non-degenerate local minimum $\bX^* \in \mathcal{V}$, i.e. 
\[
\bnabla F(\bX^*) = 0 \;\; \text{ and } \;\; \exists \delta > 0 \; \text{ such that } \;\; \langle \bnabla^2 F(\bX^*)[\bH], \bH \rangle \geq \delta \|\bH\|_F^2 \;\; \text{for all } \bH \in \mathcal{V},
\]
then the sequence $\{\bX_k\}$ generated by Algorithm \ref{alg:CRN} will convergence quadratically to $\bX^*$. 
\end{theorem}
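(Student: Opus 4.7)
The plan is to lift the classical Nesterov--Polyak local convergence argument \cite{Nes06} to the matrix vector-space setting through the vectorization map introduced in \eqref{eqn:hess_vec}. Since $\mathcal{V}$ is a $d$-dimensional subspace of $\R^{n\times n}$, I would begin by choosing an orthonormal basis of $\mathcal{V}$ with respect to the trace inner product. This identifies $\mathcal{V}$ with $\R^d$ isometrically, sends the Frobenius norm to the Euclidean norm, the gradient $\bnabla F$ to a vector in $\R^d$, and (via \eqref{eqn:hess_vec} restricted to $\mathcal{V}$) the Hessian operator $\bnabla^2 F$ to a symmetric $d \times d$ matrix. Under this identification the cubic-regularized subproblem solved in Algorithm \ref{alg:CRN} becomes, up to isometry, exactly the cubic subproblem studied in \cite{Nes06}.

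First I would translate the hypotheses under this isometry. The Lipschitz Hessian condition transports to the vectorized function with the same constant $L$, while the non-degeneracy condition $\langle \bnabla^2 F(\bX^*)[\bH], \bH\rangle \geq \delta \|\bH\|_F^2$ becomes $\delta$-positive definiteness of the vectorized Hessian at the coordinate image of $\bX^*$. By continuity of $\bnabla^2 F$ there is a neighborhood $\mathcal{N}$ of $\bX^*$ on which the restriction of $\bnabla^2 F(\bX)$ to $\mathcal{V}$ has smallest eigenvalue at least $\delta/2$; I would arrange the initialization hypothesis so that $\bX_0 \in \mathcal{N}$.

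Next I would use the first- and second-order optimality conditions of the cubic subproblem (see Section 4 of \cite{Nes06}): writing $\bH_k := \bX_{k+1} - \bX_k$ and denoting by $M$ the regularization parameter in Algorithm \ref{alg:CRN}, the minimizer satisfies
\[
\bnabla F(\bX_k) + \bnabla^2 F(\bX_k)[\bH_k] + \tfrac{M}{2}\|\bH_k\|_F\,\bH_k = 0, \qquad \bnabla^2 F(\bX_k) + \tfrac{M}{2}\|\bH_k\|_F\,\bm{I} \succeq 0.
\]
Combining the first relation with the Lipschitz Hessian bound \eqref{eqn:lip_ineq_1} yields $\|\bnabla F(\bX_{k+1})\|_F \leq \tfrac{L+M}{2}\|\bH_k\|_F^2$. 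Taylor-expanding $\bnabla F$ around $\bX^*$ and using $\bnabla F(\bX^*) = \bm{0}$ together with the lower bound $\delta/2$ on the Hessian on $\mathcal{N}$, one obtains $\|\bX_{k+1} - \bX^*\|_F \leq \tfrac{2}{\delta}\|\bnabla F(\bX_{k+1})\|_F$ and, similarly, a comparison between $\|\bH_k\|_F$ and $\|\bX_k - \bX^*\|_F$. Chaining these bounds yields the quadratic recursion
\[
\|\bX_{k+1} - \bX^*\|_F \leq C\,\|\bX_k - \bX^*\|_F^2
\]
for a constant $C$ depending only on $L$, $M$, and $\delta$. Once $\|\bX_0 - \bX^*\|_F < 1/C$, induction both keeps the iterates inside $\mathcal{N}$ and delivers quadratic convergence.

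The main obstacle I anticipate is the bookkeeping around the matrix--vector correspondence: verifying that the argmin of the matrix cubic model coincides with the argmin of its vectorized counterpart, and that restricting $\bnabla^2 F(\bX)$ to $\mathcal{V}$ preserves the positive definiteness and norm structure used in the scalar Nesterov--Polyak estimates. Once this reduction is made precise, the remainder of the argument follows the template of \cite{Nes06} essentially verbatim.
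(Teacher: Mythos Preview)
Your proposal is correct and shares the paper's core reduction: both you and the paper fix an orthonormal basis of $\mathcal{V}$, identify the matrix cubic subproblem with the Euclidean one in $\R^d$, and then invoke the Nesterov--Polyak machinery. Where you diverge is in the local quadratic-convergence bookkeeping. The paper follows \cite{Nes06} verbatim by tracking the scalar
\[
\delta_k \;=\; \frac{L\,\|\bb_{\bX_k}\|}{\lambda_{\min}^2(\bA_{\bX_k})},
\]
showing $\|\bH_{\bX_k}\|_F \le \|\bb_{\bX_k}\|/\lambda_{\min}(\bA_{\bX_k})$ via the subproblem optimality characterization, using the Lipschitz-Hessian transfer (their Lemma~\ref{lem:mat_lipcond}) to propagate a lower bound on $\lambda_{\min}(\bA_{\bX_{k+1}})$, and concluding $\delta_{k+1}\le \tfrac{8}{3}\delta_k^2$. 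You instead track $\|\bX_k-\bX^*\|_F$ directly, passing through local strong convexity on a neighborhood $\mathcal{N}$ to convert gradient norms into distances. Your route is slightly more elementary (no need to monitor eigenvalue drift or introduce $\delta_k$), while the paper's route is a cleaner literal lift of \cite{Nes06} and gives convergence of $\delta_k$ without first localizing to a fixed ball around $\bX^*$. One small point to tighten in your write-up: the step ``similarly, a comparison between $\|\bH_k\|_F$ and $\|\bX_k-\bX^*\|_F$'' needs the intermediate bound $\|\bH_k\|_F \le (2/\delta)\|\bnabla F(\bX_k)\|_F$ (pair the first-order optimality condition with the $\delta/2$ Hessian floor) together with local Lipschitzness of $\bnabla F$, and you must invoke this \emph{before} you know $\bX_{k+1}\in\mathcal{N}$, so that the induction closes.
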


\begin{algorithm}[!htbp]
\caption{Cubic Regularized Newton for \eqref{eqn:crn_prob}}\label{alg:CRN}
\begin{algorithmic}
\State{\bf Input:} Choose $\bX_0 \in \cV$ and $L_0 \in (0, L]$.\vspace{0.05in}
\For{$k=0,1,2\hdots$} \vspace{0.05in}
	\State(1) Find $M_k \in [L_0, 2L]$ such that, 
	\[ 
		F(\bm{T}_{M_k}(\bX_k)) \leq F(\bX_k), 
	\]
	\vspace{-0.3in}
	 \State where $\bm{T}_{M}(\bX_k)$ is an element of the Argmin set,
	\vspace{-0.1in}
	 \begin{equation}
	\underset{{\bY \in \mathcal{V} }}{\text{Argmin}}\left[ \langle \bm{\nabla} F(\bX_k), \bY-\bX_k \rangle + \frac{1}{2} \langle \bm{\nabla^2} F(\bX_k)[\bY-\bX_k], \bY-\bX_k \rangle + \frac{M_k}{6} \|\bY-\bX_k\|_F^3 \right] \nonumber 
	\end{equation}
	\State(2)\;\; $\bX_{k+1} = \bm{T}_{M_k}(\bX_k)$ \vspace{0.05in}
\EndFor
\end{algorithmic}
\end{algorithm}

Theorems \ref{thm:global_convergence_CRN} and \ref{thm:superlinear_CRN} were first proven in \cite{Nes06} over $\R^n$. Nesterov then extended the convergence analysis over general vector spaces with convex constraints and convex functions in \cite{nesterov2006cubic}. Theorem \ref{thm:global_convergence_CRN} and a superlinear convergence result was proven over Riemannian manifolds in \cite{Jun18} for a variant of cubic-regularized Newton. The contribution of this paper in terms of the analysis of the algorithm comes in the form of the proof technique. The proofs in Section \ref{sec:Appendix_D} for these results rely upon vectorizing the matrix subproblem and proving lemmas which relate the vectorized problem to the matrix problem. Upon connecting these two models, the original analysis in \cite{Nes06} can be applied to prove the results in the more general matrix setting.

%================================================================
\section{Implementation of Cubic-Regularized Newton for Matrix Optimization}\label{sec:implement}
We now discuss implementation of Algorithm \ref{alg:CRN}. The key hurdle to applying the method rests primarily on solving a matrix optimization subproblem of the form, 
\begin{equation}\label{eqn:mat_subprob}
\min_{\bH \in \mathcal{V}}\;\;\langle \bG, \bH \rangle + \frac{1}{2} \langle \mathcal{L}(\bH), \bH \rangle + \frac{M}{6} \|\bH\|_F^3,
\end{equation}
where $\mathcal{L}: \mathcal{V} \rightarrow \mathcal{V}$ is a linear map, $\bG\in \mathcal{V}$ and $M>0$. In this section we describe one approach to obtain a global optimal solution to solve \eqref{eqn:mat_subprob} by first rewriting it as an unconstrained cubic-regularized subproblem over $\R^d$. First, select an orthonormal basis, $\{\bE_1, \hdots, \bE_d\}$,  for the d-dimensional vector subspace $\mathcal{V} \subseteq \R^{n\times n}$. Then, any $\bH\in \mathcal{V}$ can be rewritten in terms of the basis as, 
\[
\bH = \sum_{i=1}^{d} z_i \bE_i,  
\]
where $\bz \in \R^d$. Using the decomposition of $\bH$ we then rewrite the terms in the objective function of the model: 
\begin{equation}\label{eqn:b_vec_mat}
\langle \bG, \bH \rangle = \sum_{i=1}^{d} z_i \langle \bG, \bE_i \rangle, 
\end{equation}
\begin{equation}
    \langle \mathcal{L}(\bH), \bH \rangle = \langle \sum_{i=1}^{d} z_i \mathcal{L}(\bE_i), \sum_{j=1}^{d} z_j \bE_j  \rangle = \sum_{i=1}^{d} \sum_{j=1}^{d} z_i z_j \langle \mathcal{L}(\bE_i), \bE_j \rangle, 
\end{equation}
and because the $\bE_i$'s are orthonormal it follows $\| \bH \|_F^3 = \|\bz\|^3$; therefore, through the basis, we can compute the optimal solution to \eqref{eqn:mat_subprob} by solving the unconstrained vector optimization model, 
\begin{equation}\label{eqn:vec_subprob_main}
\min_{\bz \in \R^d}\;\; \bm{b}^\top \bz + \frac{1}{2}\bz^\top \bm{A} \bz + \frac{M}{6}\|\bz\|^3,
\end{equation}
where $b_i = \langle \bG, \bE_i\rangle$ and $A_{ij} = \langle \mathcal{L}(\bE_i), \bE_j \rangle$ for $i,j = 1,\hdots, d$. 

Many efficient solution methods exist to solve the vectorized subproblem \cite{C19,jiang2021accelerated,Nes06}. Therefore, this approach for solving the matrix subproblem will be tractable provided the cost of constructing the vectorized subproblem is reasonable. From Algorithm \ref{alg:CRN}, we see $\bG = \bm{\nabla} F(\bX_k)$ and 
$\mathcal{L}(\bH):= \bm{\nabla^2}F(\bX_k)[\bH]$. 
Thus, in-order to form the subproblem, it will cost one gradient computation and $d$-Hessian evaluations of the objective function where the Hessians will be computed at the basis matrices. This means the overall cost of forming the vectorized subproblem will depend on both the complexity of the objective function's derivatives and the complexity of the basis for the matrix vector space.  

Another approach to solving this model would be to apply a Lanczos-based approach as described by Agarwal et al. in Section 8 of \cite{Ag21}. This is a basis-free approach, but it would still require multiple calls to the Hessian and potentially $d$ such calls if the global optimal solution is desired for \eqref{eqn:mat_subprob}. Though an inexact version of cubic-regularized Newton is not implemented in this paper such methods exist and would benefit computationally from a Lanczo-based approach. However, there is still much room for improvement on producing efficient methods for solving the matrix subproblem, and the recent works on \eqref{eqn:vec_subprob_main} might enable improvements in the matrix setting.     

%================================================================
\section{Application to Fairness}\label{sec:experiments}

We present a new model for fair and robust covariance matrix estimation which we solve by our proposed procedure. More precisely, we estimate the shape or scatter matrix, which is a scalar multiple of the covariance matrix when the covariance exists (for simplicity, we refer to the covariance).
Our fairness model is an adapted variant of the Tyler's M-estimator (TME) model; we call it the Fair TME model. The Fair TME model seeks to simultaneously obtain both a robust and fair covariance matrix for multiple subgroups of a single dataset. Substantial effort has been done on robust covariance matrix estimation; however, it appears no one has considered fairness in their estimations. Our fairness model takes a single statistic for the subgroups of a dataset, which we derive from the TME model, and through a minimization problem seeks to make this statistic small and similar amongst the subgroups. This concept is exceptionally general and therefore extends to other data science problems. 

Section \ref{sec:TME_FairTME} reviews the TME model and presents our proposed Fair TME model. Sections \ref{sec:num_exp_sim} and \ref{sec:num_exp_real} provide the results of the numerical experiments we performed comparing the two models on simulated and real datasets respectively. 

\subsection{Review of the TME Model and Introduction to the Fair TME Model}\label{sec:TME_FairTME}
The Tyler's M-estimator (TME) model proposed by Tyler in \cite{T87} computes a robust estimator for the covariance matrix of a centered dataset. It is determined by solving the following equation: 
\begin{equation}\label{eqn:TME_eq}
\textbf{Find}\;\; \bR \in \mathcal{S}^{p \times p}_{++} \;\; \textbf{s.t.}\;\; \frac{p}{n} \sum_{i=1}^{n} \frac{\bx_i \bx_i^\top}{\bx_i^\top \bR^{-1} \bx_i} -  \bR = 0.  
\end{equation} 
Since \eqref{eqn:TME_eq} is invariant to scaling in the matrix $\bR$, which makes sense for shape matrix estimation, a trace constraint is often placed on $\bR$, i.e. $\trace(\bR) = p$. It has been shown the solution to \eqref{eqn:TME_eq} can be found by solving the following minimization problem, 
\begin{align}\label{eqn:TMEorig}
\min&\;\; \frac{p}{n} \sum_{i=1}^{n} \log(\bx_i^\top \bR^{-1} \bx_i) + \log \det \bR \\
\text{s.t.}&\;\; \bR \in \mathcal{S}^{p\times p}_{++}.\nonumber 
\end{align}
The relationship between \eqref{eqn:TME_eq} and \eqref{eqn:TMEorig} is clear from the first-order optimality condition for \eqref{eqn:TMEorig}. Additionally, efficient methods exist to solve for TME. Tyler \cite{T87} presented a fixed-point method which solves \eqref{eqn:TMEorig} under various data settings; this approach has been investigated extensively and has proven linear convergence under different data assumptions \cite{F20,G20}. 

We propose a Fair TME model which seeks a single robust covariance matrix that achieves a level of fairness between multiple subgroups of a dataset. Assume we have $r$ independent centered subgroups $\left\{ \{\bx^1_i\}_{i=1}^{n_1}, \hdots, \{\bx^r_i\}_{i=1}^{n_r} \right\}$ contained in $\R^p$ which could vary in size and be drawn from multiple distributions. Define the functions, 
\begin{equation}\label{eqn:fj_defn}
f_j(\bR) := \frac{p}{n_j} \sum_{i=1}^{n_j} \log((\bx_i^j)^\top \bR^{-1} (\bx_i^j)) + \log \det \bR, \;\; j=1, \hdots, r.
\end{equation}
Letting  $f_j^*:= \min\left\{ f_j(\bR)\; | \; \bR \in \mathcal{S}^{p \times p}_{++} \right\}$, we define the {\it TME error} for the $j$-th subgroup to be,
\begin{equation}\label{eqn:residual}
 \cE_j(\bR):= f_j(\bR) - f_j^*, \;\; j=1, \hdots, r. 
\end{equation}
We then define a perfectly fair covariance matrix for multiple subgroups of a dataset. 
\begin{definition}\label{def:fair_covmat}
The matrix $\bR_f \in \mathcal{S}^{p \times p}_{++}$ is a perfectly fair covariance matrix for the subgroups $\left\{ \{\bx^1_i\}_{i=1}^{n_1}, \hdots, \{\bx^r_i\}_{i=1}^{n_r} \right\}$ contained in $\R^p$ provided, 
\[
\cE_i(\bR_f) = \cE_j(\bR_f), \;\; \forall \; i,j \in \{1,\hdots, r\}. 
\]
\end{definition}
\noindent A perfectly fair covariance matrix provides equality in the TME errors. It is often unachievable, so we propose an optimization model which seeks a nearly perfectly fair covariance while minimizing the individual TME errors. Thus, our Fair TME model is stated as follows:   
\begin{align}\label{eqn:fair_TME}
\min&\;\;\; \mu_1 \sum_{j=1}^{r} \cE_j(\bR) + \frac{\mu_2}{2}\sum_{i=1}^{r-1}\; \sum_{j=i+1}^{r}\left( \cE_i(\bR) - \cE_j(\bR) \right)^2 \\
\text{s.t.}&\;\; \bR \in \mathcal{S}^{p\times p}_{++},\nonumber 
\end{align}
where $\mu_1, \mu_2 \geq 0$ are constants. These constants dictate the focus of the model. If $\mu_2 >> \mu_1$, then fairness is given priority over the subgroups having small TME errors. This means a perfectly fair covariance might be obtained but at the cost of large $\cE_j$ values. The most desirable situation is obtaining a solution $\bR^*$ to \eqref{eqn:fair_TME} giving an objective value of zero. This would be finding a perfectly fair and universally robust covariance matrix. Since such a solution often does not exist, we seek a local minimizer of \eqref{eqn:fair_TME}. 

The non-convexity of the model and the fact it is a constrained problem over the set of positive definite matrices makes it a candidate for the methods outlined in this paper. One possible route to solve the model is to apply the change of variable $\bZ = \bR^{-1}$ to \eqref{eqn:fair_TME} and then utilize the matrix function $\bG(\bR)=\bR^2$. These steps produce the unconstrained model,  
\begin{align}\label{eqn:fair_TME_uncon_sq}
\min&\;\;\; \mu_1 \sum_{j=1}^{r} \hat{\cE}_j(\bR^2) + \frac{\mu_2}{2}\sum_{i=1}^{r-1}\; \sum_{j=i+1}^{r} \left( \hat{\cE}_i(\bR^2) - \hat{\cE}_j(\bR^2) \right)^2 \\
\text{s.t.}&\;\; \bR \in \mathcal{S}^{p\times p},\nonumber 
\end{align}
where,
\begin{equation}\label{eqn:Ehat_defn}
\hat{\cE}_j(\bR) := \frac{p}{n_j} \sum_{i=1}^{n_j} \log((\bx_i^j)^\top \bR (\bx_i^j)) - \log \det \bR - f_j^*, \;\; j=1, \hdots, r.
\end{equation}
Thus, if $\bR^*$ solves \eqref{eqn:fair_TME_uncon_sq}, then $(\bR^*)^{-2}$ solves \eqref{eqn:fair_TME}. Theorem \ref{thm:transform_problem} guarantees and enables us to obtain a solution to \eqref{eqn:fair_TME} from \eqref{eqn:fair_TME_uncon_sq} while Theorems \ref{thm:primary_matrix_func_deriv} and \ref{thm:CR} allow us to calculate the derivatives needed to apply Algorithm \ref{alg:CRN} to solve \eqref{eqn:fair_TME_uncon_sq}. While first-order methods could also be applied to \eqref{eqn:fair_TME_uncon_sq}, they will often lack the benefits of local superlinear convergence and guarantees to return second-order stationary points (see discussion of Figure \ref{fig:converg_plot}).

Other approaches to solving \eqref{eqn:fair_TME} can be implemented besides solving \eqref{eqn:fair_TME_uncon_sq}. We present one other method in Section \ref{sec:extra_FairTME_models} and compare it to the approach above. 

%======================================================================

\subsection{Fair TME vs. Standard TME Numerical Experiments: Simulated Data}\label{sec:num_exp_sim}

For our numerical experiment with simulated data we generated a dataset containing four centered elliptical subgroups with varying numbers of points in $\R^{30}$. The points in the subgroups were generated from the expression, 
$
\bx = \bm{S_p}\bm{\xi},
$
where $\bm{S_p} \in \mathcal{S}^{30 \times 30}_{++}$ and $\bm{\xi}$ was drawn uniformly at random from the sphere in $\R^{30}$. The matrices $\bm{S_p}$ were formed in MATLAB 2021b as, 
\[
\bA = randn(30,30);\; \bm{S_p} = \bA\bA^\top + \delta\bm{I},
\]
where $\delta$ was set to $10^{-8}$ to ensure positive definiteness. We fixed the random number seeds to be {\it rng(1)}, {\it rng(5)}, {\it rng(7)} and {\it rng(11)} to generate subgroups containing 50, 100, 200 and 75 points respectively.   

We computed stationary points to the Fair TME model \eqref{eqn:fair_TME} in all of our experiments by solving \eqref{eqn:fair_TME_uncon_sq} with cubic-regularized Newton. To compare the Fair TME model to the standard TME model \eqref{eqn:TMEorig} we solved the TME model for the entire dataset using the standard fixed-point approach. All numerical experiments we conducted in MATLAB 2021b. 

For each experiment we provide a table presenting two key quantities: the TME errors \eqref{eqn:residual} and the fairness value. The fairness value is defined as the maximum absolute difference between the TME errors for the individual subgroups at the solution matrix $\bR^*$, i.e., 
\begin{equation}\label{eqn:fairness_value}
\textbf{Fairness Value} := \max\left\{ | \cE_i(\bR^*) - \cE_j(\bR^*)| \; \big| \; i,j \in \{1,\hdots, r\} \right\}. 
\end{equation}
A perfectly fair covariance matrix will have a fairness value of zero. Ideally a fair and robust covariance matrix will have low TME errors for the subgroups and a low fairness value. 

\begin{table}[!htbp]
\centering
\begin{tabular}{cllll}
\hline
\multicolumn{5}{c}{\bf Fair TME} \\ \hline
$(\mu_1, \mu_2)$ & \multicolumn{2}{c}{TME Errors} & \multicolumn{2}{c}{Fairness Value} \\ \hline 
$(1,1)$  & \multicolumn{2}{c}{[ 81.2524,  81.2882,  81.2671,  81.2483]}           & \multicolumn{2}{c}{   0.03993}               \\ 
$(5,1)$  & \multicolumn{2}{c}{[ 81.2040,  81.3778,  81.2751,  81.1842]}           & \multicolumn{2}{c}{   0.19357}               \\ 
$(1,5)$  & \multicolumn{2}{c}{[ 81.2625,  81.2696,  81.2654,  81.2616]}           & \multicolumn{2}{c}{   0.00804}               \\ 
$(10,1)$ & \multicolumn{2}{c}{[ {\bf81.1470},  81.4831,  81.2841,  81.1101]}           & \multicolumn{2}{c}{   0.37294}              \\ 
$(1,10)$ & \multicolumn{2}{c}{[ 81.2637,  {\bf81.2673},  {\bf81.2652},  81.2633]}           & \multicolumn{2}{c}{\bf0.00402}       \\ \hline
\multicolumn{5}{c}{\bf TME} \\ \hline 
& \multicolumn{2}{c}{TME Errors} & \multicolumn{2}{c}{Fairness Value} \\ \hline 
& \multicolumn{2}{c}{[108.0067,  97.6193,  86.6591,  {\bf54.2080}]} & \multicolumn{2}{c}{  53.79874}                \\ \hline 
\end{tabular}
\caption{\label{tab:simdat}Results of the simulated data example for varying choices of the parameters $\mu_1$ and $\mu_2$ on four synthetic elliptical subgroups in $\R^{30}$ containing 50, 75, 100 and 200 points respectively.
}
\end{table}
Table \ref{tab:simdat}  displays the results for the simulated data. From Table \ref{tab:simdat} we see the Fair TME model was superior in finding a fair covariance matrix for the subgroups while solving the original TME formulation with the entire dataset did not provide an equitable covariance matrix. For each parameter setting of $\mu_1$ and $\mu_2$ the fairness value obtained by solving the Fair TME model with cubic-regularized Newton via matrix functions outperformed the TME model. The Fair TME solutions generated fairness values which were at least 100 times smaller and up to 10,000 times smaller than the fairness value produced by the TME model.

Furthermore, each parameter setting of the Fair TME model produced better TME errors for all but the last subgroup when compared to those computed by the TME model. The TME errors for the TME model also suggest a level of bias. Indeed, the TME error for the subgroup with 50 points was about 108 while the TME error for the subgroup with 200 points was approximately 54. This large discrepancy seems to suggest the standard TME model gives preferential treatment to the subgroup contributing the most information to the dataset.

\begin{figure}[!htbp]
\centering
\includegraphics[width=\textwidth]{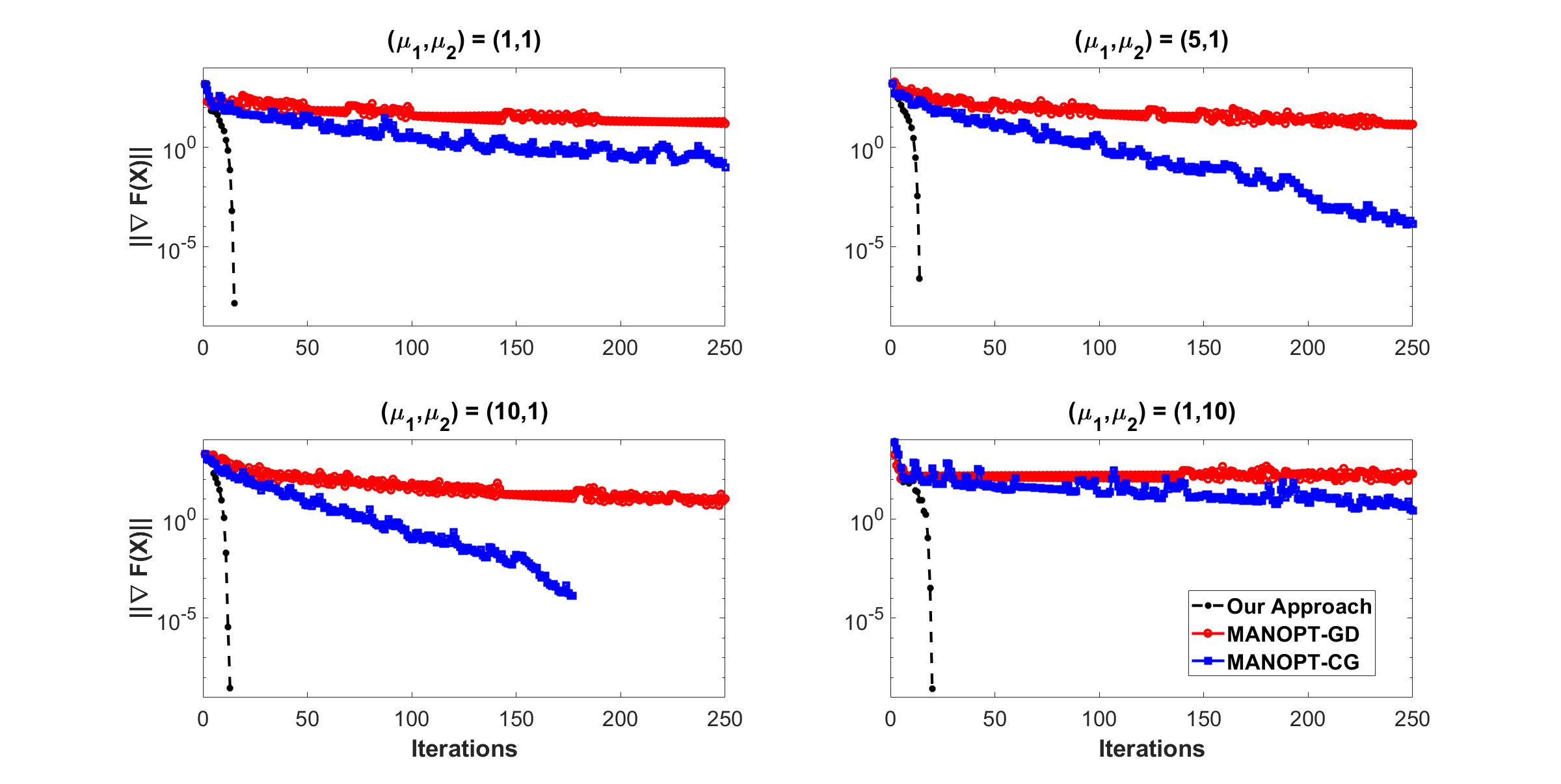}
\caption{\label{fig:converg_plot} Display of the convergence plots for four of the five instances of the Fair TME model in Table \ref{tab:simdat}. The title of each figure gives the parameter setting it corresponds to in Table \ref{tab:simdat}. The y-axis is in log-scale and provides the norm of the gradient at each iteration of the algorithm and the x-axis states the iteration count}
\end{figure}

For the Fair TME models solved in Table \ref{tab:simdat}, cubic-regularized Newton quickly solved \eqref{eqn:fair_TME_uncon_sq} with local superlinear convergence in each instance to second-order stationary points. Figure \ref{fig:converg_plot} displays a sample of the convergence plots for the models solved in the simulated experiment. The y-axis is in log-scale and provides the Frobenius norm of the gradient of \eqref{eqn:fair_TME_uncon_sq} at each iteration while the x-axis gives the iteration count. We also applied Manopt's \cite{MANOPT} gradient descent (GD) and conjugate gradient (CG) solvers on the model. Figure \ref{fig:converg_plot} shows both Manopt's GD and CG solvers failed to return a first-order $\epsilon$-stationary point with $\epsilon \leq 10^{-7}$ after 250 iterations while Algorithm \ref{alg:CRN} obtained a second-order  $\epsilon$-stationary point after less than 25 iterations. Actually, Manopt's GD and CG solvers failed to obtain an $\epsilon$-stationary-point with $\epsilon \leq 10^{-7}$ even after 2000 iterations.

\subsection{Fair TME vs. Standard TME Numerical Experiments: Real Data}\label{sec:num_exp_real}
We also conducted three experiments using real datasets from the UCI Machine Learning Repository \cite{UCIMLR}, namely, the Wine \cite{WineData}, Skills \cite{SkillData} and Credit \cite{yeh2009comparisons} datasets. Each real dataset tested was utilized recently in works on Fair PCA \cite{olfat2019convex,vu2022distributionally,yeh2009comparisons}. We processed these datasets according to the suggested procedure of \cite{olfat2019convex}. For the Fair TME model experiments, we removed categorical features, centered the subgroups of each dataset by subtracting their sample mean and then normalized all features to have unit variance. For the TME model experiments, the centering was performed for the whole dataset instead of the different subgroups. 

Table \ref{tab:datasets} provides the names of the various subgroups of each dataset and their respective dimensions. For each 
parameter setting tested in the three experiments, Algorithm \ref{alg:CRN} returned a second-order $\epsilon$-stationary point to \eqref{eqn:fair_TME_uncon_sq} with $\epsilon \leq 10^{-6}$ which we used to solve \eqref{eqn:fair_TME}.
\begin{table}[!htbp]
\centering 
\begin{tabular}{|cccc|}
\hline
\multicolumn{2}{|c|}{\textbf{Wine Datasets \cite{WineData}}}                            & \multicolumn{2}{c|}{\textbf{Skills Datasets \cite{SkillData}}}       \\ \hline
\multicolumn{1}{|c|}{Name}              & \multicolumn{1}{c|}{Dimension} & \multicolumn{1}{c|}{Name}                & Dimension \\ \hline
\multicolumn{1}{|c|}{\it Red\textunderscore good}          & \multicolumn{1}{c|}{855 x 11}  & \multicolumn{1}{c|}{\it Skill\textunderscore Tier\textunderscore 1}        & 514 x 15  \\
\multicolumn{1}{|c|}{\it Red\textunderscore bad}           & \multicolumn{1}{c|}{744 x 11}  & \multicolumn{1}{c|}{\it Skill\textunderscore Tier \textunderscore 2}        & 1364 x 15 \\
\multicolumn{1}{|c|}{\it White\textunderscore good}        & \multicolumn{1}{c|}{3258 x 11} & \multicolumn{1}{c|}{\it Skill\textunderscore Tier\textunderscore 3}        & 1427 x 15 \\
\multicolumn{1}{|c|}{\it White\textunderscore bad}         & \multicolumn{1}{c|}{1640 x 11}  & \multicolumn{1}{c|}{\it Skill\textunderscore Tier\textunderscore 4}        & 90 x 15   \\ \hline
\multicolumn{4}{|c|}{\textbf{Credit Datasets \cite{yeh2009comparisons} }}                                                                                 \\ \hline
\multicolumn{1}{|c|}{Name}              & \multicolumn{1}{c|}{Dimension} & \multicolumn{1}{c|}{Name}                & Dimension \\ \hline
\multicolumn{1}{|c|}{\it Male\textunderscore Single\textunderscore HgEd}  & \multicolumn{1}{c|}{5579 x 19} & \multicolumn{1}{c|}{\it Female\textunderscore Single\textunderscore HgEd}  & 8260 x 19 \\
\multicolumn{1}{|c|}{\it Male\textunderscore Single\textunderscore LwEd}  & \multicolumn{1}{c|}{974 x 19}  & \multicolumn{1}{c|}{\it Female\textunderscore Single\textunderscore LwEd}  & 1151 x 19 \\
\multicolumn{1}{|c|}{\it Male\textunderscore Married\textunderscore HgEd} & \multicolumn{1}{c|}{4062 x 19} & \multicolumn{1}{c|}{\it Female\textunderscore Married\textunderscore HgEd} & 6506 x 19 \\
\multicolumn{1}{|c|}{\it Male\textunderscore Married\textunderscore LwEd} & \multicolumn{1}{c|}{1128 x 19} & \multicolumn{1}{c|}{\it Female\textunderscore Married\textunderscore LwEd} & 1963 x 19 \\ \hline
\end{tabular}
\caption{\label{tab:datasets}The Wine and Skills datasets were partitioned into four subgroups and the Credit dataset was partitioned into eight subgroups. The first number in the dimension gives the number of samples; the second number provides the dimension of the samples}
\end{table}
\subsubsection{Wine Dataset Experiment} 

The Wine Dataset \cite{WineData} contains chemical composition information for 6497 red and white wines comprised of 11 input features and a single output feature being wine quality. The wine quality is  given as a score between 0 and 10. We partitioned this dataset into four subgroups based on the wine type (red or white) and wine quality. For instance, {\it Red\textunderscore good} is the set of red wines in the dataset which were given a quality rating between 6 and 10, and {\it White\textunderscore bad} is the set of white wines with a quality rating from 0 to 5. Table \ref{tab:wine} displays the results of the Fair and standard TME models on this dataset. 

From Table \ref{tab:wine} we see the Fair TME models performed the best in terms of both the TME errors and the fairness value. For each parameter setting the individual TME errors for the respective subgroups were smaller for the Fair TME model than for the standard TME model applied to the entire dataset. For the parameter setting $(\mu_1, \mu_2) = (1,10)$, the fairness value obtained by the Fair TME model was about 40 times smaller than the fairness value computed by the standard TME model. 

\begin{table}[!htbp]
\centering 
\begin{tabular}{cllll}
\hline
\multicolumn{5}{c}{\bf Fair TME} \\ \hline
$(\mu_1, \mu_2)$ & \multicolumn{2}{c}{TME Errors} & \multicolumn{2}{c}{Fairness Value} \\ \hline 
$(1,1)$  & \multicolumn{2}{c}{[  1.7593,   1.7414,   1.9641,   1.6333]}           & \multicolumn{2}{c}{   0.33075}               \\ 
$(5,1)$  & \multicolumn{2}{c}{[  1.6752,   1.6068,   2.1279,   {\bf1.5500}]}           & \multicolumn{2}{c}{   0.57786}               \\ 
$(1,5)$  & \multicolumn{2}{c}{[  1.8203,   1.8202,   1.8829,   1.7761]}           & \multicolumn{2}{c}{   0.10677}               \\ 
$(10,1)$  & \multicolumn{2}{c}{[  {\bf1.6236},   {\bf1.5384},   2.2060,   1.5653]}           & \multicolumn{2}{c}{   0.66754}               \\ 
$(1,10)$  & \multicolumn{2}{c}{[  1.8362,   1.8367,   {\bf1.8699},   1.8120]}           & \multicolumn{2}{c}{  {\bf 0.05788}}        \\ \hline
\multicolumn{5}{c}{\bf TME} \\ \hline 
& \multicolumn{2}{c}{TME Errors} & \multicolumn{2}{c}{Fairness Value} \\ \hline 
& \multicolumn{2}{c}{[  4.5959,   4.8870,   3.0424,   2.4628]} & \multicolumn{2}{c}{   2.42420}                \\ \hline 
\end{tabular}
\caption{\label{tab:wine}Results of the numerical experiments on the Wine Dataset. The four TME errors in the table correspond to the {\it Red\textunderscore good}, {\it Red\textunderscore bad}, {\it White\textunderscore good} and the {\it White\textunderscore bad} subgroups respectively. Column one in the table gives the parameter settings tested for the Fair TME models.}
\end{table}

\subsubsection{Skills Dataset Experiment} 

The Skills Dataset \cite{SkillData} contains video game tracking data from 3395 players of a real-time strategy game. The four subgroups: {\it Skill\textunderscore Tier\textunderscore1}, {\it Skill\textunderscore Tier\textunderscore2}, {\it Skill\textunderscore Tier\textunderscore3} and {\it Skill\textunderscore Tier 4} were constructed from player rankings. The players were ranked from novice, 1, to professional, 8; {\it Skill\textunderscore Tier\textunderscore1} contains the players with the lowest player rankings of 1 and 2; {\it Skill\textunderscore Tier\textunderscore2} contains players with rankings 3 and 4; {\it Skill\textunderscore Tier\textunderscore3} has the players with rankings 5 and 6, and {\it Skill\textunderscore Tier\textunderscore4} contains the best players with rankings 7 and 8. Due to missing data, we further removed two features.
Fifteen player features remained once categorical features and features with missing entries were removed. Table \ref{tab:skills} displays the results of the numerical experiments. 
\begin{table}[!htbp]
\centering 
\begin{tabular}{cllll}
\hline
\multicolumn{5}{c}{\bf Fair TME} \\ \hline
$(\mu_1, \mu_2)$ & \multicolumn{2}{c}{TME Errors} & \multicolumn{2}{c}{Fairness Value} \\ \hline 
$(1,1)$  & \multicolumn{2}{c}{[  2.0541,   1.5958,   1.5721,   {\bf2.1863}]}           & \multicolumn{2}{c}{   0.61427}               \\ 
$(5,1)$  & \multicolumn{2}{c}{[  1.8855,   {\bf1.0481},   0.9702,   2.4469]}           & \multicolumn{2}{c}{   1.47675}               \\ 
$(1,5)$  & \multicolumn{2}{c}{[  2.1714,   2.0230,   2.0400,   2.1906]}           & \multicolumn{2}{c}{   0.16764}               \\ 
$(10,1)$  & \multicolumn{2}{c}{[  {\bf1.7937},   0.8780,   {\bf0.8212},   2.6447]}           & \multicolumn{2}{c}{   1.82350}               \\ 
$(1,10)$  & \multicolumn{2}{c}{[  2.1967,   2.1144,   2.1286,   2.2046]}           & \multicolumn{2}{c}{  \bf0.09017}        \\ \hline
\multicolumn{5}{c}{\bf TME} \\ \hline 
& \multicolumn{2}{c}{TME Errors} & \multicolumn{2}{c}{Fairness Value} \\ \hline 
& \multicolumn{2}{c}{[  5.2708,   2.0687,   1.5205,   6.0223]} & \multicolumn{2}{c}{   4.50187}                \\ \hline 
\end{tabular}
\caption{\label{tab:skills}Results of the numerical experiments on the Skills Dataset. The four TME errors correspond to the Skill\textunderscore Tier\textunderscore1, Skill\textunderscore Tier\textunderscore2, Skill\textunderscore Tier\textunderscore3 and Skill\textunderscore Tier\textunderscore4 subgroups respectively. Column one gives the parameter setting for the Fair TME models.}
\end{table}

The results from Table \ref{tab:skills} demonstrate the Fair TME model again outperformed the standard TME model. For the strictest parameter setting promoting fairness, the Fair TME model obtained a fairness value approximately 50 times smaller than the fairness value achieved by the TME model. Additionally, the best TME errors produced by the Fair TME model for the different parameter settings were better than the TME errors produced by the standard model. For the parameter setting $(\mu_1, \mu_2) = (10,1)$ all of the TME errors and the fairness value produced by the Fair TME model were smaller than the corresponding values produced by the standard TME model.

\subsubsection{Credit Dataset Experiment} 

In the final experiment we used the default credit card dataset \cite{yeh2009comparisons} which contains credit information from Taiwan for 30,000 individuals. From the categorical elements we formed eight different subgroups based on gender, marital status and education. An individual was considered to have a high education (abbreviated HgEd) if they obtained at least a college degree otherwise they were labeled as having a low education (abbreviated LwEd). Thus, the subgroup {\it Male\textunderscore Married\textunderscore HgEd} is comprised of married men with college degrees, and {\it Female\textunderscore Single\textunderscore LwEd} contains single women without college degrees. Upon removing the categorical information from the dataset 19 features remained which tracked past payment history, amount on the billing statement over several months and the total amount of given credit. The results of the experiment are provided in Table \ref{tab:credit}.

\begin{table}[ht]
\centering
\tabcolsep=0.11cm
\begin{tabular}{cllll}
\hline
\multicolumn{5}{c}{\bf Fair TME} \\ \hline
$(\mu_1, \mu_2)$ & \multicolumn{2}{c}{TME Errors} & \multicolumn{2}{c}{Fairness Value} \\ \hline 
$(1,1)$  & \multicolumn{2}{c}{[  0.8602,   {\bf1.3988},   1.1714,   1.3107,   1.1070,   1.1968,   1.1247,   {\bf 1.4034}]}           & \multicolumn{2}{c}{   0.54313}               \\ 
$(5,1)$  & \multicolumn{2}{c}{[  0.5514,   1.4438,   0.8200,   1.2548,   0.8371,   1.0563,   0.9476,   1.5759]}           & \multicolumn{2}{c}{   1.02451}               \\ 
$(1,5)$  & \multicolumn{2}{c}{[  1.2453,   1.4579,   1.4234,   1.4285,   1.3709,   1.3928,   1.3521,   1.4477]}           & \multicolumn{2}{c}{   0.21268}               \\ 
$(10,1)$ & \multicolumn{2}{c}{[  {\bf0.4796},   1.4752,   0.7089,   {\bf1.2506},   0.7676,   {\bf 1.0286},   0.9072,   1.6790]}           & \multicolumn{2}{c}{   1.19937}              \\ 
$(1,10)$ & \multicolumn{2}{c}{[  1.3780,   1.5022,   1.4896,   1.4854,   1.4550,   1.4665,   1.4399,   1.4957]}           & \multicolumn{2}{c}{\bf 0.12419}       \\ \hline
\multicolumn{5}{c}{\bf TME} \\ \hline 
& \multicolumn{2}{c}{TME Errors} & \multicolumn{2}{c}{Fairness Value} \\ \hline 
& \multicolumn{2}{c}{[  0.7839,   2.4049,   {\bf0.5246},   1.7299,   {\bf0.3924},   1.4689,   {\bf0.5209},   2.1529]} & \multicolumn{2}{c}{   2.01253}                \\ \hline 
\end{tabular}
\caption{\label{tab:credit}Results of the numerical experiments on the Credit Dataset. The eight TME errors in the table correspond to the Male\textunderscore Single\textunderscore HgEd, Male\textunderscore Single\textunderscore LwEd, Male\textunderscore Married\textunderscore HgEd, Male\textunderscore Married\textunderscore LwEd, Female\textunderscore Single\textunderscore HgEd, Female\textunderscore Single\textunderscore LwEd, Female\textunderscore Married\textunderscore HgEd, Female\textunderscore Married\textunderscore LwEd subgroups respectively.}
\end{table}

The results in Table \ref{tab:credit} again affirm our model's advantage for fairness. The best fairness value produced by the Fair TME model was 16 times smaller than the fairness value generated by the standard TME model. Also, as in the Wine and Skills datasets, the TME errors produced by the Fair TME model were often smaller than those produced by the TME model. For parameter setting $(\mu_1, \mu_2) = (10,1)$ the TME errors produced by the Fair TME model were lower than those of the standard TME model for all but two of the eight subgroups. 

Another observation to note is the standard TME model was again unfair to underrepresented subgroups - the subgroups with the fewest number of samples. The largest subgroup in this example was the {\it Female\textunderscore Single\textunderscore HgEd} subgroup with 8260 samples and its TME error was 0.3924 while the smallest subgroup was the {\it Male\textunderscore Single\textunderscore LwEd} subgroup with 974 samples and its corresponding TME error was 2.4049. This phenomenon was also present in the Skills Dataset between the {\it Skill\textunderscore Tier\textunderscore1} and {\it Skill\textunderscore Tier\textunderscore4} subgroups. Therefore, one apparent advantage of the Fair TME model over the standard TME model is its lack of favoritism to subgroups with a greater relative number of points in the dataset.

%================================================================
\section{Conclusions}\label{sec:conclusions}

We utilized matrix functions to rewrite a constrained matrix optimization model as an unconstrained model over the set of symmetric matrices. We then proposed to solve the non-convex unconstrained model with cubic-regularized Newton and proved a chain rule identity which enabled us to compute the necessary higher-order derivatives to implement the method. Finally, we tested the approach on a fair version of the Tyler's M-estimator model, the Fair TME model, which sought to locate a fair and robust covariance matrix for multiple subgroups of a single dataset. Through experiments on real and simulated data we demonstrated the Fair TME model produced solutions with strong TME errors and fairness values which were superior to those obtained by the standard TME formulation.  

Moving forward there are several ways to extend the outlined method. First, the key step in implementing Algorithm \ref{alg:CRN} is solving the matrix subproblem in Step 2. Recently efficient methods for solving the cubic-regularized subproblem in the vector setting have drawn research attention \cite{C19,jiang2022cubic,jiang2021accelerated}; however, not as much work has been done on the matrix problem. Investigating how to extend these solution methods to the matrix setting without vectorizing the problem could yield efficient subroutines for large matrix problems.

Further, at this time we restricted our discussion to constrained matrix optimization problems over symmetric matrices and we only investigated one type of matrix function; however, another line of research would be to see what types of constrained problems could be handled with generalized matrix functions for non-square matrices \cite{hawkins1973generalized} or even tensor functions \cite{li2021continuity}. This could enable higher-order methods to be generated solving non-square matrix and tensor optimization models which are becoming more prevalent in applications

\section*{Funding}
This material is based upon work supported by the National Science Foundation Graduate Research Fellowship Program under Grant No. 1839286 and the National Science Foundation award DMS-2124913. Any opinions, findings, and conclusions or recommendations expressed in this material are those of the author(s) and do not necessarily reflect 
the views of the National Science Foundation 

%\bibliographystyle{plain}
%\bibliography{references}

%========================================================

\section{Appendix A: Proof of Lemma \ref{lem:hess_lip}}\label{sec:Appendix_B}

\begin{lemma}\label{lem:hess_lip_a}
If the Hessian of $F: \mathcal{V} \rightarrow \R$ is Lipschitz continuous on $\mathcal{V}$ with parameter $L>0$, then for all $\bX,\bY \in \mathcal{V}$, 
\begin{equation}\label{eqn:lip_ineq_1_a}
\| \bnabla F(\bY) - \bnabla F(\bX) - \bnabla^2 F(\bX)[\bY-\bX] \| _F \leq \frac{L}{2} \| \bX - \bY \|_F^2,
\end{equation}
\begin{equation}\label{eqn:lip_ineq_2_a}
| F(\bY) - F(\bX)- \langle \nabla F(\bX), \bY-\bX \rangle - \frac{1}{2}\langle \bnabla^2 F(\bX)[\bY-\bX], \bY-\bX \rangle| \leq \frac{L}{6} \| \bX - \bY \|_F^3.
\end{equation}
\end{lemma}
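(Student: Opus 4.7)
The plan is to adapt the classical Hilbert-space derivation of these inequalities to the matrix vector space $\mathcal{V}$. The only ingredient that is not immediate from the definitions in Section \ref{sec:matrix_calculus} is the bound $\|\bnabla^2 F(\bZ)[\bH] - \bnabla^2 F(\bX)[\bH]\|_F \le L\|\bZ - \bX\|_F\,\|\bH\|_F$, which I would establish first. Using \eqref{eqn:2nd_order_DD} and the Riesz representation of the Frobenius norm gives $\|\bnabla^2 F(\bX)[\bH]\|_F = \sup_{\|\bW\|_F=1}\langle \bnabla^2 F(\bX)[\bH], \bW\rangle = \sup_{\|\bW\|_F=1} D^2 F(\bX)[\bH, \bW]$; bilinearity and the definition of $\|D^2 F(\bX)\|$ then yield $\|\bnabla^2 F(\bX)[\bH]\|_F \le \|D^2 F(\bX)\|\,\|\bH\|_F$. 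Combining this with the Lipschitz hypothesis delivers the preliminary bound.

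For \eqref{eqn:lip_ineq_1_a}, I would introduce the curve $\phi(t) := \bnabla F(\bX + t(\bY-\bX))$ on $[0,1]$. By \eqref{eqn:mat_hess}, $\phi'(t) = \bnabla^2 F(\bX + t(\bY-\bX))[\bY - \bX]$, so the fundamental theorem of calculus, applied entrywise in any fixed orthonormal basis of $\mathcal{V}$, gives $\bnabla F(\bY) - \bnabla F(\bX) = \int_0^1 \bnabla^2 F(\bX + t(\bY-\bX))[\bY-\bX]\,dt$. Subtracting $\bnabla^2 F(\bX)[\bY-\bX] = \int_0^1 \bnabla^2 F(\bX)[\bY-\bX]\,dt$, passing the Frobenius norm inside the integral, and invoking the preliminary bound at $\bZ = \bX + t(\bY-\bX)$ with $\|\bZ-\bX\|_F = t\|\bY-\bX\|_F$ yields $\|\bnabla F(\bY) - \bnabla F(\bX) - \bnabla^2 F(\bX)[\bY-\bX]\|_F \le \int_0^1 L t\,\|\bY-\bX\|_F^2\,dt = \frac{L}{2}\|\bY-\bX\|_F^2$.

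For \eqref{eqn:lip_ineq_2_a}, I would reduce to \eqref{eqn:lip_ineq_1_a}. Setting $g(t):=F(\bX + t(\bY-\bX))$, definition \eqref{eqn:dir_deriv_dt} and identity \eqref{eqn:dir_deriv_grad} give $g'(t) = \langle \bnabla F(\bX + t(\bY-\bX)), \bY-\bX\rangle$, hence $F(\bY) - F(\bX) = \int_0^1 \langle \bnabla F(\bX + t(\bY-\bX)), \bY-\bX\rangle\,dt$. Using $1 = \int_0^1 dt$ and $\frac{1}{2} = \int_0^1 t\,dt$ to absorb the two subtracted terms, the left-hand side of \eqref{eqn:lip_ineq_2_a} equals $\left|\int_0^1 \langle \bnabla F(\bX + t(\bY-\bX)) - \bnabla F(\bX) - t\,\bnabla^2 F(\bX)[\bY-\bX], \bY-\bX\rangle\,dt\right|$. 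Linearity of the Hessian in its argument rewrites $t\,\bnabla^2 F(\bX)[\bY-\bX]$ as $\bnabla^2 F(\bX)[t(\bY-\bX)]$, at which point \eqref{eqn:lip_ineq_1_a} applied between $\bX$ and $\bX + t(\bY-\bX)$, together with Cauchy--Schwarz, bounds the integrand by $\frac{L t^2}{2}\|\bY-\bX\|_F^3$; integrating produces the stated bound $\frac{L}{6}\|\bY-\bX\|_F^3$.

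The main obstacle, and the only non-routine step, is the preliminary passage from the bilinear operator norm $\|D^2 F(\cdot)\|$ to a Frobenius bound on the linear action $\bnabla^2 F(\cdot)[\bH]$; once that is in hand, both estimates collapse to elementary scalar integrals along the segment $\bX + t(\bY - \bX)$, and the second inequality is a bootstrap of the first.
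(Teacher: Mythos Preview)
Your proposal is correct and follows essentially the same route as the paper: both arguments integrate along the segment $\bX+t(\bY-\bX)$, apply the fundamental theorem of calculus to $\bnabla F$ for the first inequality, and then bootstrap via Cauchy--Schwarz and the first inequality for the second. The only difference is that you make explicit the passage from the bilinear operator norm $\|D^2F(\cdot)\|$ to the Frobenius bound on $\bnabla^2F(\cdot)[\bH]$, whereas the paper uses this implicitly when writing $\|\bnabla^2 F(\bX+t(\bY-\bX))[\bY-\bX]-\bnabla^2F(\bX)[\bY-\bX]\|_F\le Lt\|\bX-\bY\|_F^2$.
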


\begin{proof}{ Define $g(t) := \bnabla F(\bX + t(\bY-\bX))$. By the Fundamental Theorem of Calculus, 
\[
\bnabla F(\bY) - \bnabla F(\bX) = \int_{0}^{1} \; \bnabla^2 F(\bX+t(\bY-\bX))[\bY-\bX]\; dt.
\]
Therefore, 
\begin{align}
\| \bnabla F(\bY) &- \bnabla F(\bX) - \bnabla^2 F(\bX)[\bY-\bX]\|_F  \nonumber \\
&\leq \| \int_{0}^{1}  \bnabla^2 F(\bX+t(\bY-\bX))[\bY-\bX] - \bnabla^2 F(\bX)[\bY-\bX] \; dt  \|_F \nonumber \\
&\leq \int_{0}^{1} L t \|\bX-\bY \|_F^2 \; dt \nonumber \\
&= \frac{L}{2} \|\bX-\bY \|_F^2. \nonumber 
\end{align}
By Taylor's theorem and \eqref{eqn:lip_ineq_1_a},
\begin{align}
|F(\bY) - &F(\bX) - \langle \nabla F(\bX), \bY-\bX \rangle - \frac{1}{2}\langle \bnabla^2 F(\bX)[\bY-\bX], \bY-\bX \rangle| \nonumber \\
&\leq  \int_{0}^{1} | \langle \bnabla F(\bX + t(\bY-\bX)) - \nabla F(\bX) - t \bnabla^2 F(\bX)[\bY-\bX], \bY-\bX \rangle| \; dt \nonumber \\
&\leq \int_{0}^{1} \| \bnabla F(\bX + t(\bY-\bX)) - \bnabla F(\bX) - t \bnabla^2 F(\bX)[\bY-\bX]\|_F \|\bY-\bX\|_F \; dt \nonumber \\
&\leq  \int_{0}^{1} \frac{L}{2} \|t(\bY-\bX)\|_F^2 \|\bY-\bX\|_F \; dt \nonumber \\
&= \frac{L}{6}\|\bY-\bX\|_F^3. \nonumber 
\end{align}
} 
\vspace{-0.25in}
\end{proof} 

%========================================================

\section{Appendix B: Proof of Theorem \ref{thm:CR}}\label{sec:Appendix_C} We prove Theorem \ref{thm:CR} restated below. 
\begin{theorem}\label{thm:CR_a}
Let $\mathcal{V} \subseteq \R^{n \times n}$ be a vector space, $F: \mathcal{V} \rightarrow \R$ and $\bG: \mathcal{V} \rightarrow \mathcal{V}$ be any matrix function defined by an analytic scalar function. Let $\bX, \bH \in \mathcal{V}$ and assume $F$ is differentiable at $\bG(\bX)$ and $\bG$ is differentiable at $\bX$. Then, assuming $F$ and $\bG$ are two-times continuously differentiable, the first and second-order directional derivatives of $F \circ \bG$ at $\bX$ in the direction of $\bH$ are:
\begin{equation}\label{eqn:CR_1st_a}
D(F\circ \bG)(\bX)[\bH] = DF(\bG(\bX))\bigg[ D\bG(\bX)[\bH] \bigg],
\end{equation} 
\begin{multline}\label{eqn:CR_2nd_a}
D^2(F\circ \bG)(\bX)[\bH,\bH] = \\ D^2F(\bG(\bX))\bigg[ D\bG(\bX)[\bH], D\bG(\bX)[\bH]\bigg] + DF(\bG(\bX))\bigg[D^2 \bG(\bX)[\bH,\bH]\bigg].
\end{multline} 
\end{theorem}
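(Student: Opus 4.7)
The plan is to reduce the matrix chain rule identity to a one-dimensional calculation along the affine line $t \mapsto \bX + t\bH$. Specifically, I would introduce the auxiliary scalar function $\phi(t) := F(\bG(\bX + t\bH))$ together with the matrix-valued curve $\psi(t) := \bG(\bX + t\bH)$, so that $\phi(t) = F(\psi(t))$. By Definition~\ref{def:dir_deriv_mat_func} and the analogous second-order definition encoded in \eqref{eqn:2nd_order_DD} and \eqref{eqn:mat_hess}, one has $D(F\circ \bG)(\bX)[\bH] = \phi'(0)$ and $D^2(F\circ \bG)(\bX)[\bH,\bH] = \phi''(0)$. Thus the entire theorem reduces to computing these two scalar derivatives.

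The first-order identity \eqref{eqn:CR_1st_a} is immediate: applying the ordinary chain rule to $\phi = F \circ \psi$ and using \eqref{eqn:dir_deriv_grad} gives $\phi'(t) = \langle \bnabla F(\psi(t)), \psi'(t) \rangle = DF(\psi(t))[\psi'(t)]$, and setting $t=0$ together with the identification $\psi'(0) = D\bG(\bX)[\bH]$ reproduces the claim. For the second-order identity \eqref{eqn:CR_2nd_a}, I would differentiate $\phi'(t)$ once more via the product rule for the trace inner product:
\begin{equation*}
\phi''(t) = \left\langle \tfrac{d}{dt}\bnabla F(\psi(t)),\, \psi'(t) \right\rangle + \langle \bnabla F(\psi(t)),\, \psi''(t) \rangle.
\end{equation*}
The second summand is exactly $DF(\psi(t))[\psi''(t)]$ by \eqref{eqn:dir_deriv_grad}, and the first summand is rewritten by recognizing, via \eqref{eqn:mat_hess} applied to the composition $t \mapsto \bnabla F(\psi(t))$, that $\tfrac{d}{dt}\bnabla F(\psi(t)) = \bnabla^2 F(\psi(t))[\psi'(t)]$; plugging this in and invoking \eqref{eqn:2nd_order_DD} converts the first summand into $D^2F(\psi(t))[\psi'(t), \psi'(t)]$. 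Evaluating at $t=0$ and using $\psi'(0) = D\bG(\bX)[\bH]$ together with $\psi''(0) = D^2\bG(\bX)[\bH,\bH]$ yields \eqref{eqn:CR_2nd_a}.

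The main obstacle is justifying the step $\tfrac{d}{dt}\bnabla F(\psi(t)) = \bnabla^2 F(\psi(t))[\psi'(t)]$, which silently interchanges the $t$-derivative with the gradient operator. This requires $\bnabla F$ to be continuously differentiable along $\psi$ and $\psi$ itself to be twice differentiable in $t$. Both conditions are supplied by the hypotheses: $F \in C^2(\cV)$ gives the needed smoothness of $\bnabla F$, while the assumption that $\bG$ is induced by an analytic scalar function, combined with Theorem~\ref{thm:primary_matrix_func_deriv}, ensures that $\psi(t) = \bG(\bX + t\bH)$ has well-defined continuous first and second derivatives in $t$ on all of $\R$. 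Once this regularity is in place, the argument is essentially one-variable calculus conducted in the trace inner product, and no explicit spectral decomposition or divided-difference expansion of $\bG$ is needed in the proof itself.
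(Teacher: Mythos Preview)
Your argument is correct and considerably shorter than the paper's. The paper does \emph{not} reduce to the one-variable curve $t\mapsto F(\bG(\bX+t\bH))$; instead it first observes that any $\bG$ arising from an analytic scalar function can be treated as a matrix polynomial, derives explicit closed-form expressions for $D\bG(\bX)[\bH]$ and $D^2\bG(\bX)[\bH,\bH]$ when $\bG(\bX)=\bX^n$ by induction (Lemma~\ref{lem:1} and Corollary~\ref{thm:cor1}), then computes $\bnabla(F\circ\bG)(\bX)$ and $\bnabla^2(F\circ\bG)(\bX)[\bH]$ explicitly in terms of these polynomial sums (Lemmas~\ref{lem:grad}--\ref{lem:hess}), and finally checks \eqref{eqn:CR_2nd_a} by expanding both sides and matching terms via trace identities. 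Your approach bypasses all of this machinery: once one accepts the scalar-valued chain rule $\tfrac{d}{dt}\bnabla F(\psi(t))=\bnabla^2 F(\psi(t))[\psi'(t)]$, the identity is immediate, and in fact your argument never uses the analyticity of $\bG$---it applies to any $C^2$ map $\bG:\cV\to\cV$. It is worth noting that the paper's own proof of Lemma~\ref{lem:hess} invokes exactly this same step (writing $\tfrac{d}{dt}\bnabla F(\bG(\bX+t\bH))|_{t=0}=\bnabla^2 F(\bG(\bX))[D\bG(\bX)[\bH]]$), so the ``main obstacle'' you isolate is something the paper assumes as well. What the paper's longer route buys is explicit polynomial formulas for $\bnabla(F\circ\bG)$ and $\bnabla^2(F\circ\bG)[\bH]$, which can be useful for implementation, but these are not needed to establish the theorem itself.
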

\noindent The first order chain rule identity has been proven in multiple references for general vector spaces (e.g. \cite{Lee}). Thus, we only verify the second identity, and this will be accomplished by proving \eqref{eqn:CR_2nd_a} holds for arbitrary matrix polynomials, 
\begin{equation}\label{eqn:mat_poly}
\bG(\bX) = \sum_{i=0}^{n} a_i \bX^i,
\end{equation}
where $a_i \in \R$ and $\bG: \R^{n\times n} \rightarrow \R^{n\times n}$. Proving the identity holds for arbitrary matrix polynomials will be sufficent because $\bG$ is generated by an analytic scalar function. Note, if $\bG$ is generated by the analytic scalar function $G$, then without loss of generality we can assume the scalar function $G$ has an infinite Taylor series expansion about zero, 
$$
G(x) = \sum_{i=0}^{\infty} a_0 x^i.
$$
Then, by Definition \ref{def:mat_func}, for $\bX = \bQ \Diag(\lambda_1, \hdots, \lambda_n)\bQ^\top$, 
\[
\bG(\bX) = \bQ \bG(\bLambda) \bQ^\top = \bQ \left( \sum_{i=0}^{\infty} a_i \bLambda^i\right) \bQ^\top  = \sum_{i=0}^{\infty} a_i \bX^i.
\]
Therefore, if we prove \eqref{eqn:CR_2nd_a} holds for any matrix polynomial \eqref{eqn:mat_poly} of arbitrary degree, then it will hold true for any matrix function generated by an analytic scalar function. We now prove a few intermediate results which will allow us to prove Theorem \ref{thm:CR_a}.
\begin{lemma}\label{lem:1}
If \;$\bG(\bX) = \bX^n$, then for $\ell_1, \ell_2, \ell_3 \in \{0,1,2,\hdots\}$ 
\begin{equation}\label{eqn:lem3_eq1}
\bD\bG(\bX)[\bH] = \sum_{\ell_1+\ell_2= n -1} \bX^{\ell_1} \bH \bX^{\ell_2},
\end{equation}
\begin{equation}\label{eqn:lem3_eq2}
\frac{1}{2}\bD^2 \bG(\bX)[\bH,\bH] = \sum_{\ell_1+\ell_2 = n-2} \bX^{\ell_1} \bH^2 \bX^{\ell_2} + \sum_{\ell_1 + \ell_2 + \ell_3 = n-3} \bX^{\ell_1} \bH \bX^{\ell_2+1} \bH \bX^{\ell_3}. 
\end{equation}
\end{lemma}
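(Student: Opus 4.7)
The plan is to compute both directional derivatives directly from their definitions as $\varphi'(0)$ and $\varphi''(0)$, where $\varphi(t) := \bG(\bX+t\bH) = (\bX+t\bH)^n$. Expanding the product by full distributivity,
\[
(\bX + t\bH)^n \;=\; \prod_{i=1}^n(\bX + t\bH) \;=\; \sum_{w \in \{0,1\}^n} t^{|w|}\, M_w,
\]
where $|w|$ is the number of $1$'s in $w$ and $M_w$ denotes the length-$n$ product obtained by placing $\bH$ at each position where $w_i=1$ and $\bX$ elsewhere. Grouping by the value of $|w|$ exhibits $\varphi(t)$ as an explicit matrix polynomial in $t$ whose coefficients can be read off directly.

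For the first identity \eqref{eqn:lem3_eq1}, $\varphi'(0)$ equals the coefficient of $t^1$, which is the sum over all words containing exactly one $\bH$. I will parametrize such a word by the integer $\ell_1 \in \{0,\dots,n-1\}$ counting the $\bX$'s that precede the lone $\bH$, and set $\ell_2 := n-1-\ell_1$; the corresponding product is $\bX^{\ell_1}\bH\bX^{\ell_2}$, and summing over $\ell_1$ gives exactly $\sum_{\ell_1+\ell_2=n-1}\bX^{\ell_1}\bH\bX^{\ell_2}$.

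For the second identity \eqref{eqn:lem3_eq2}, $\tfrac{1}{2}\varphi''(0)$ equals the coefficient of $t^2$, which is a sum over words with exactly two $\bH$'s. The key step is to split these words into two classes according to whether the two $\bH$'s are adjacent or separated by at least one $\bX$. In the adjacent class I let $\ell_1$ count the $\bX$'s before the $\bH\bH$ block and $\ell_2 = n-2-\ell_1$ those after, yielding $\bX^{\ell_1}\bH^2\bX^{\ell_2}$ with $\ell_1+\ell_2 = n-2$. In the separated class I let $\ell_1$, $\ell_2+1$, and $\ell_3$ count the $\bX$'s before, strictly between, and after the two $\bH$'s respectively, so that $\ell_2 \geq 0$ and $\ell_1+(\ell_2+1)+\ell_3 = n-2$, i.e.\ $\ell_1+\ell_2+\ell_3 = n-3$; the product then reads $\bX^{\ell_1}\bH\bX^{\ell_2+1}\bH\bX^{\ell_3}$. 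Adding the two contributions recovers \eqref{eqn:lem3_eq2}.

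The main obstacle will be the reindexing in the separated-$\bH$ class: the exponent of the inner $\bX$-block is naturally $\ell_2+1$ rather than $\ell_2$, so the constraint must be translated from $n-2$ to $n-3$ to match the statement of the lemma. Boundary cases (for instance $n\leq 2$, where the second sum of \eqref{eqn:lem3_eq2} is empty, or $n\leq 1$ where \eqref{eqn:lem3_eq1} itself is vacuous) are handled by the standard convention that a sum over an empty index set is zero, which is consistent with $\varphi(t)$ being a polynomial of degree $n$ in $t$.
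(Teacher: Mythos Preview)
Your proof is correct and takes a genuinely different route from the paper's. The paper argues by induction on $n$: it verifies the formulas for $n=1,2,3$ directly, then assumes them for all exponents up to $k$ and expands $(\bX+\bH)^{k+1}=(\bX+\bH)(\bX+\bH)^k$, regrouping the cross terms to recover the stated sums at level $k+1$. Your approach instead expands $(\bX+t\bH)^n$ once and for all as a sum over binary words and reads off the coefficients of $t$ and $t^2$ combinatorially, splitting the two-$\bH$ words by adjacency.

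The trade-offs are modest. Your direct expansion is shorter and makes the structure of the two sums in \eqref{eqn:lem3_eq2} transparent: the first sum is exactly the ``adjacent $\bH$'s'' contribution and the second the ``separated $\bH$'s'' contribution, which explains at a glance why the inner exponent is $\ell_2+1$ and the constraint shifts to $n-3$. The paper's inductive argument, while longer, has the advantage of not needing the observation that $\varphi(t)$ is a genuine polynomial in $t$ (trivial here, but a point one must note), and its recursive structure dovetails with how the authors later manipulate these sums term-by-term in the proof of Theorem~\ref{thm:CR}. Either method proves the lemma; yours is the more economical of the two.
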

\begin{proof}{We prove these identities by induction. Note, 
\begin{align}
(\bX + \bH )\; &= \bX + \bH, \nonumber \\
(\bX + \bH )^2 &= \bX^2 + \bX\bH + \bH\bX + \bH^2, \nonumber \\
(\bX + \bH )^3 &= \bX^3 + \bX^2 \bH + \bX\bH\bX + \bX\bH^2 + \bH\bX^2 + \bH\bX\bH + \bH^2 \bX + \bH^3, \nonumber 
\end{align}
from which we deduce: 
\begin{align}
\bD(\bX^1)(\bX)[\bH] &= \bH, \;\; &&\frac{1}{2}\bD^2(\bX^1)(\bX)[\bH,\bH] = 0, \nonumber \\
\bD(\bX^2)(\bX)[\bH] &= \bX\bH + \bH\bX, \;\; &&\frac{1}{2}\bD^2(\bX^2)(\bX)[\bH,\bH] = \bH^2, \nonumber \\
\bD(\bX^3)(\bX)[\bH] &= \bX^2 \bH + \bX\bH\bX  + \bH\bX^2, \;\; &&\frac{1}{2}\bD^2(\bX^3)(\bX)[\bH,\bH] = \bH^2 \bX + \bH\bX\bH + \bX \bH^2.\nonumber 
\end{align}
Thus, we have our base cases. Assume \eqref{eqn:lem3_eq1} and \eqref{eqn:lem3_eq2} hold for all $n$ up to some $k \geq 3$. Then, 
\begin{align}
(\bX + \bH)^{k+1}  &= (\bX+\bH)\left(\bX^k + \bD(\bX^k)(\bX)[\bH] + \mathcal{O}(\bH^2) \right) \nonumber \\
&= (\bX+\bH) \left( \bX^k + \sum_{\ell_1+\ell_2= k -1} \bX^{\ell_1} \bH \bX^{\ell_2}+ \mathcal{O}(\bH^2) \right) \nonumber \\
				  &= \bX^{k+1} +\sum_{\ell_1+\ell_2= k -1} \bX^{\ell_1+1} \bH \bX^{\ell_2}+ \bH\bX^k +  \mathcal{O}(\bH^2) \nonumber \\
				  &= \bX^{k+1} +  \sum_{\ell_1+\ell_2= k} \bX^{\ell_1} \bH \bX^{\ell_2} + \mathcal{O}(\bH^2), \nonumber 
\end{align}
where the second equality follows from the inductive hypothesis. The final equality implies\\ $\bD(\bX^{k+1})(\bX)[\bH] =  \sum_{\ell_1+\ell_2= k} \bX^{\ell_1} \bH \bX^{\ell_2} $ proving \eqref{eqn:lem3_eq1} by induction. Similarly, 
\begin{align}
&(\bX+\bH)^{k+1} =(\bX+\bH)\left(\bX^k + \bD(\bX^k)(\bX)[\bH] + \frac{1}{2}\bD^2(\bX^k)(\bX)[\bH,\bH] + \mathcal{O}(\bH^3) \right) \nonumber \\
&=(\bX+\bH)\hspace{-0.05in}\left( \bX^k + \hspace{-0.20in}  \sum_{\ell_1+\ell_2= k -1} \hspace{-0.15in} \bX^{\ell_1} \bH \bX^{\ell_2} + \hspace{-0.20in} \sum_{\ell_1+\ell_2 = k-2} \hspace{-0.15in} \bX^{\ell_1} \bH^2 \bX^{\ell_2} + \hspace{-0.25in} \sum_{\ell_1 + \ell_2 + \ell_3 = k-3} \hspace{-0.25in}  \bX^{\ell_1} \bH \bX^{\ell_2+1} \bH \bX^{\ell_3} + \mathcal{O}(\bH^3) \right) \nonumber \\ \nonumber \\
&=\bX^{k+1} + \sum_{\ell_1+\ell_2= k } \bX^{\ell_1} \bH \bX^{\ell_2} + {\color{blue}\sum_{\ell_1+\ell_2 = k-2} \bX^{\ell_1+1} \bH^2 \bX^{\ell_2}}
\nonumber \\
&\hspace{1.15in}+{\color{red}\sum_{\ell_1 + \ell_2 + \ell_3 = k-3}  \bX^{\ell_1 +1} \bH \bX^{\ell_2+1} \bH \bX^{\ell_3}}
+ {\color{blue}\sum_{\ell_1+\ell_2= k -1} \bH \bX^{\ell_1} \bH \bX^{\ell_2}} +  \mathcal{O}(\bH^3). \nonumber
\end{align}
Note, 
\[
{\color{blue}\sum_{\ell_1+\ell_2 = k-2} \hspace{-0.2in}\bX^{\ell_1+1} \bH^2 \bX^{\ell_2} + \sum_{\ell_1+\ell_2= k -1} \hspace{-0.2in}\bH \bX^{\ell_1} \bH \bX^{\ell_2}} = \sum_{\ell_1+\ell_2=k-1} \hspace{-0.2in}\bX^{\ell_1} \bH^2 \bX^{\ell_2} + {\color{magenta}\sum_{\ell_1+\ell_2= k - 2} \hspace{-0.2in}\bH \bX^{\ell_1+1} \bH \bX^{\ell_2}},
\]
and 
\[
{\color{red} \sum_{\ell_1 + \ell_2 + \ell_3 = k-3} \hspace{-0.2in} \bX^{\ell_1 +1} \bH \bX^{\ell_2+1} \bH \bX^{\ell_3}} +  {\color{magenta}\sum_{\ell_1 + \ell_2= k - 2} \hspace{-0.2in}\bH \bX^{\ell_1+1} \bH \bX^{\ell_2}} = \sum_{\ell_1 + \ell_2 + \ell_3 = k-2} \hspace{-0.2in} \bX^{\ell_1} \bH \bX^{\ell_2+1} \bH \bX^{\ell_3}.
\]
Therefore, 
\begin{multline}
(\bX + \bH)^{k+1} =  \bX^{k+1} + \sum_{\ell_1+\ell_2= k } \bX^{\ell_1} \bH \bX^{\ell_2} +  \sum_{\ell_1+\ell_2=k-1} \bX^{\ell_1} \bH^2 \bX^{\ell_2}  \nonumber \\ \hspace{-0.6in} + \sum_{\ell_1 + \ell_2 + \ell_3 = k-2}  \bX^{\ell_1} \bH \bX^{\ell_2+1} \bH \bX^{\ell_3} + \mathcal{O}(\bH^3), 
\end{multline}
and \eqref{eqn:lem3_eq2} follows by induction.}
\end{proof}

Lemma \ref{lem:1} by the linearity of differential operators leads in a straightforward fashion to the following corollary which provides the first and second-order directional derivatives of all finite matrix polynomials. 
\begin{corollary}\label{thm:cor1}
If \;$\bG(\bX) = \sum_{i=0}^{n} a_i \bX^n$, then, 
\[
\bD\bG(\bX)[\bH] = \sum_{i=1}^{n}\;\; \sum_{\ell_1+\ell_2= i -1} a_i \bX^{\ell_1} \bH \bX^{\ell_2},
\]
\[
\frac{1}{2}\bD^2 \bG(\bX)[\bH,\bH] =  \sum_{i=2}^{n} a_i \left( \sum_{\ell_1+\ell_2= i-2} \bX^{\ell_1} \bH^2 \bX^{\ell_2} + \sum_{\ell_1 + \ell_2 + \ell_3 = i-3} \bX^{\ell_1} \bH \bX^{\ell_2+1} \bH \bX^{\ell_3}\right). 
\]
\end{corollary}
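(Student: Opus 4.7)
The plan is to apply the linearity of the first and second directional derivative operators, $\bD(\,\cdot\,)(\bX)[\bH]$ and $\bD^2(\,\cdot\,)(\bX)[\bH,\bH]$, term by term to the polynomial decomposition of $\bG$. Both operators are linear in their matrix-valued-function argument since each is defined via a limit of difference quotients, and limits commute with finite real-linear combinations. Lemma \ref{lem:1} already supplies closed-form expressions for the first and second directional derivatives of each monomial $\bG_i(\bX) := \bX^i$, so the corollary reduces to summing against the coefficients $a_i$.

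For the first identity I would write
\[
\bD \bG(\bX)[\bH] \;=\; \sum_{i=0}^{n} a_i\, \bD(\bX^i)(\bX)[\bH] \;=\; \sum_{i=1}^{n} a_i \sum_{\ell_1 + \ell_2 = i - 1} \bX^{\ell_1} \bH \bX^{\ell_2},
\]
where the $i = 0$ term drops because the constant map $\bX \mapsto \bm{I}$ has zero derivative, and for $i \geq 1$ we substitute \eqref{eqn:lem3_eq1} from Lemma \ref{lem:1}. The second identity is obtained by the identical argument from \eqref{eqn:lem3_eq2}, the outer sum now starting at $i = 2$ because both the constant ($i = 0$) and linear ($i = 1$) monomials have vanishing second derivative.

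The only point requiring any care is verifying that the index-set conventions in Lemma \ref{lem:1} align correctly with the low-degree monomials. For the double-sum over $\{(\ell_1, \ell_2) : \ell_1 + \ell_2 = i - 2\}$ one checks it is empty for $i = 1$; for the triple-sum over $\{(\ell_1, \ell_2, \ell_3) : \ell_1 + \ell_2 + \ell_3 = i - 3\}$ one checks it is empty for $i \in \{1, 2\}$, so that at $i = 2$ only the $\bH^2$ contribution survives, matching the base case $\tfrac{1}{2} \bD^2(\bX^2)(\bX)[\bH, \bH] = \bH^2$ verified in Lemma \ref{lem:1}. Beyond this minor bookkeeping there is no real obstacle; the corollary is an immediate consequence of applying linearity of differentiation to Lemma \ref{lem:1}.
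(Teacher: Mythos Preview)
Your proposal is correct and matches the paper's own justification: the paper simply states that the corollary follows from Lemma~\ref{lem:1} by the linearity of the differential operators, which is precisely the argument you spell out. Your additional bookkeeping about the empty index sets for low-degree monomials is a helpful clarification but not a departure from the paper's approach.
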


In the next two lemmas, we compute the gradient and Hessian of the composition $F \circ \bG$ which we will utilize in our proof of Theorem \ref{thm:CR_a}. 
\begin{lemma}\label{lem:grad}
If $F$ is as defined in Theorem \ref{thm:CR_a} and $\bG$ is as defined in \eqref{eqn:mat_poly}, then, 
\[
\bnabla( F\circ \bG)(\bX) = \sum_{i=1}^{n} \;\; \sum_{\ell_1 + \ell_2  =i-1} a_i (\bX^{\ell_1})^\top \bnabla F(\bG(\bX)) (\bX^{\ell_2})^\top. 
\]
\end{lemma}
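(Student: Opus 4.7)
The plan is to combine the first-order chain rule identity \eqref{eqn:CR_1st} with the explicit formula for $D\bG(\bX)[\bH]$ provided by Corollary \ref{thm:cor1}, and then transfer the direction $\bH$ out of the inner product using cyclic invariance of the trace. Since the gradient is uniquely characterized by $DF(\bX)[\bH]=\langle \bnabla F(\bX),\bH\rangle$ for all $\bH\in\mathcal{V}$, identifying the matrix that realizes this equality will prove the formula.

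First, I would invoke \eqref{eqn:CR_1st} to write
\[
D(F\circ\bG)(\bX)[\bH] \;=\; DF(\bG(\bX))\bigl[D\bG(\bX)[\bH]\bigr] \;=\; \bigl\langle \bnabla F(\bG(\bX)),\, D\bG(\bX)[\bH]\bigr\rangle,
\]
where the second equality uses \eqref{eqn:dir_deriv_grad}. Next, I would substitute the explicit expression from Corollary \ref{thm:cor1},
\[
D\bG(\bX)[\bH] \;=\; \sum_{i=1}^{n}\sum_{\ell_1+\ell_2=i-1} a_i\, \bX^{\ell_1}\bH\bX^{\ell_2},
\]
so that the directional derivative becomes the sum
\[
D(F\circ\bG)(\bX)[\bH] \;=\; \sum_{i=1}^{n}\sum_{\ell_1+\ell_2=i-1} a_i\, \bigl\langle \bnabla F(\bG(\bX)),\, \bX^{\ell_1}\bH\bX^{\ell_2}\bigr\rangle.
\]

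The technical step is the identity $\langle \bA,\bB\bH\bC\rangle = \langle \bB^{\top}\bA\bC^{\top},\bH\rangle$ for matrices of compatible size, which follows from the cyclic property of the trace applied to $\trace(\bA^{\top}\bB\bH\bC)$. Applying this with $\bA=\bnabla F(\bG(\bX))$, $\bB=\bX^{\ell_1}$, $\bC=\bX^{\ell_2}$, I would rewrite each summand as $\langle (\bX^{\ell_1})^{\top}\bnabla F(\bG(\bX))(\bX^{\ell_2})^{\top},\bH\rangle$, giving
\[
D(F\circ\bG)(\bX)[\bH] \;=\; \Bigl\langle\, \sum_{i=1}^{n}\sum_{\ell_1+\ell_2=i-1} a_i\,(\bX^{\ell_1})^{\top}\bnabla F(\bG(\bX))(\bX^{\ell_2})^{\top},\, \bH\,\Bigr\rangle.
\]

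Since this equality holds for every $\bH\in\mathcal{V}$, the uniqueness of the gradient via \eqref{eqn:dir_deriv_grad} immediately identifies $\bnabla(F\circ\bG)(\bX)$ with the matrix in the left slot, yielding the stated formula. I do not anticipate a genuine obstacle here; the only mild care is making sure the cyclic manipulation is done on the correct pair of factors so that the transposes land on the outer $\bX^{\ell_1}$ and $\bX^{\ell_2}$ rather than on an interior factor, but this is routine once the identity $\langle \bA,\bB\bH\bC\rangle=\langle \bB^{\top}\bA\bC^{\top},\bH\rangle$ is established.
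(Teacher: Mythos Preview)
Your proposal is correct and follows essentially the same route as the paper: apply the first-order chain rule \eqref{eqn:CR_1st}, express $D\bG(\bX)[\bH]$ via Corollary~\ref{thm:cor1}, and use the trace identity $\langle \bm{A},\bX^{\ell_1}\bH\bX^{\ell_2}\rangle=\langle (\bX^{\ell_1})^\top \bm{A}(\bX^{\ell_2})^\top,\bH\rangle$ to isolate $\bH$. The only cosmetic difference is that you explicitly invoke uniqueness of the gradient at the end, which the paper leaves implicit.
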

\begin{proof}\; By the first chain rule identity in Theorem \ref{thm:CR} and Corollary \ref{thm:cor1}, 
\begin{align}
D(F\circ \bG)(\bX)[\bH] &= D(F(\bG(\bX))\bigg[\bD\bG(\bX)[\bH]\bigg] \nonumber \\
				      &= \langle \bnabla F (\bG(\bX)),\; \bD\bG(\bX)[\bH] \rangle \nonumber \\
				      &= \sum_{i=1}^{n}\;\; \sum_{\ell_1+\ell_2= i -1} \bigg\langle \bnabla F (\bG(\bX)),  a_i \bX^{\ell_1} \bH \bX^{\ell_2} \bigg \rangle \nonumber \\
				      &=\sum_{i=1}^{n}\;\; \sum_{\ell_1+\ell_2= i -1} \bigg\langle a_i  (\bX^{\ell_1})^\top \bnabla F (\bG(\bX)) (\bX^{\ell_2})^\top, \bH \bigg \rangle, \nonumber 
\end{align}
and the last equality follows since $\langle \bm{A}, \bX^{\ell_1} \bH \bX^{\ell_2} \rangle = \langle (\bX^{\ell_1})^\top \bm{A} (\bX^{\ell_2})^\top, \bH \rangle$ for all $\bm{A}, \bX, \bH \in \R^{n\times n}$. \hfill 
\end{proof}

\begin{lemma}\label{lem:hess}
If $F$ is as defined in Theorem \ref{thm:CR_a} and $\bG$ is as defined in \eqref{eqn:mat_poly}, then, 
\begin{align}\label{eqn:hess_fcircg_1}
\bnabla^2 (F \circ \bG)(\bX)[\bH]  =&\sum_{i=2}^{n} \;\; \sum_{\ell_1 + \ell_2 + \ell_3 =i-2} a_i (\bX^{\ell_1})^\top \bH^\top (\bX^{\ell_2})^\top  \bnabla F(\bG(\bX)) (\bX^{\ell_3})^\top \nonumber \\
&+\sum_{i=2}^{n} \;\; \sum_{\ell_1 + \ell_2 + \ell_3 =i-2} a_i (\bX^{\ell_1})^\top \bnabla F(\bG(\bX)) (\bX^{\ell_2})^\top  \bH^\top (\bX^{\ell_3})^\top \nonumber \\
&+ \sum_{i=1}^{n} \;\; \sum_{\ell_1 + \ell_2  =i-1} a_i  (\bX^{\ell_1})^\top \bnabla^2 F(\bG(\bX))\bigg[\bD\bG(\bX)[\bH] \bigg] (\bX ^{\ell_2})^\top. 
\end{align}
\end{lemma}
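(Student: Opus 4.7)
The plan is to differentiate the gradient formula from Lemma \ref{lem:grad} directly, using the definition
\[
\bnabla^2 (F \circ \bG)(\bX)[\bH] = \frac{d}{dt}\left( \bnabla(F\circ\bG)(\bX + t\bH) \right)\bigg|_{t=0}.
\]
By Lemma \ref{lem:grad}, $\bnabla(F\circ \bG)(\bX+t\bH)$ is a finite sum of triple products of the form $a_i ((\bX+t\bH)^{\ell_1})^\top \bnabla F(\bG(\bX+t\bH)) ((\bX+t\bH)^{\ell_2})^\top$, so the product rule in $t$ splits $\bnabla^2(F\circ\bG)(\bX)[\bH]$ into exactly three families of terms, one per factor.

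First I would handle the middle factor: by the standard chain rule applied to $\bnabla F \circ \bG$ (which is vector-valued but to which \eqref{eqn:CR_1st} still applies coordinate-wise), its $t$-derivative at $0$ is $\bnabla^2 F(\bG(\bX))[D\bG(\bX)[\bH]]$. Leaving the outer matrix-power factors evaluated at $t=0$ and summing over $i$ and $\ell_1+\ell_2 = i-1$ reproduces the third summand on the right-hand side of \eqref{eqn:hess_fcircg_1} verbatim.

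Next I would differentiate the left factor $((\bX+t\bH)^{\ell_1})^\top$, keeping the other two factors frozen at $t=0$. By \eqref{eqn:lem3_eq1} of Lemma \ref{lem:1}, $\frac{d}{dt}(\bX+t\bH)^{\ell_1}\big|_{t=0} = \sum_{k_1+k_2=\ell_1-1} \bX^{k_1}\bH\bX^{k_2}$, and taking the transpose reverses the factors and turns $\bH$ into $\bH^\top$. Substituting and re-indexing $m_1 := k_2$, $m_2 := k_1$, $m_3 := \ell_2$ turns the double sum into a sum over $m_1+m_2+m_3 = i-2$ for $i \geq 2$, which matches the first summand on the right-hand side. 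The right factor $((\bX+t\bH)^{\ell_2})^\top$ is handled symmetrically: after applying \eqref{eqn:lem3_eq1}, transposing, and re-indexing $m_1 := \ell_1$, $m_2 := k_2$, $m_3 := k_1$, the contribution becomes the second summand on the right-hand side.

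No step here is genuinely subtle; the only real care is bookkeeping on the indices and the transposition convention, since taking the transpose of $\bX^{k_1}\bH\bX^{k_2}$ reverses the order to $(\bX^{k_2})^\top \bH^\top (\bX^{k_1})^\top$. The mildly delicate point is justifying that the chain rule is applicable to $\bnabla F \circ \bG$ viewed as a map $\cV \to \cV$; this follows from the $C^2$ assumption on $F$, and the resulting derivative is exactly $\bnabla^2 F(\bG(\bX))[D\bG(\bX)[\bH]]$ by \eqref{eqn:mat_hess} and the matrix chain rule. Once the three families of terms are collected and re-indexed as above, \eqref{eqn:hess_fcircg_1} follows term-by-term and the proof concludes.
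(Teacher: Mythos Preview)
Your proposal is correct and follows essentially the same approach as the paper: differentiate the gradient formula from Lemma~\ref{lem:grad} via the product rule in $t$, obtaining three families of terms corresponding to the left power factor, the right power factor, and the middle $\bnabla F$ factor, then use Lemma~\ref{lem:1} on the power factors and the chain rule on the middle factor. The paper's only cosmetic differences are the order of presentation (it treats the outer factors before the middle one) and a slightly different bookkeeping convention for the re-indexing (shifting $\ell_1 \mapsto \ell_1+1$ before invoking \eqref{eqn:lem3_eq1}, rather than applying \eqref{eqn:lem3_eq1} with $k_1+k_2=\ell_1-1$ directly as you do).
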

\begin{proof}\;
By definition, 
\begin{align}
\bnabla^2 (F \circ \bG)(\bX)[\bH] =&\frac{d}{dt} \left( \bnabla ( F\circ \bG)(\bX + t\bH) \right)\bigg|_{t=0} \nonumber \\
=&\sum_{i=1}^{n} \;\; \sum_{\ell_1 + \ell_2  =i-1} a_i  \frac{d}{dt} \left[ \left\{(\bX + t\bH)^{\ell_1}\right\}^\top \right]\bigg|_{t=0} \bnabla F(\bG(\bX)) (\bX^{\ell_2})^\top \nonumber \\
&+ \sum_{i=1}^{n} \;\; \sum_{\ell_1 + \ell_2  =i-1} a_i  (\bX^{\ell_1})^\top \bnabla F(\bG(\bX)) \frac{d}{dt} \left[ \left\{(\bX + t\bH)^{\ell_2}\right\}^\top \right]\bigg|_{t=0} \nonumber \\
&+ \sum_{i=1}^{n} \;\; \sum_{\ell_1 + \ell_2  =i-1} a_i  (\bX^{\ell_1})^\top \bnabla^2 F(\bG(\bX))\bigg[\bD\bG(\bX)[\bH] \bigg] (\bX ^{\ell_2})^\top. \nonumber
\end{align}
Note, in deriving this equation we utilized Lemma \ref{lem:grad}, the product rule and the fact, 
\[
\frac{d}{dt} \left[ \bnabla F( \bG(\bX + t\bH)) \right]\bigg|_{t=0} =  \bnabla^2 F(\bG(\bX))\bigg[\bD\bG(\bX)[\bH] \bigg]. 
\]

\noindent Investigating the first term, using Lemma \ref{lem:1} to compute the scalar derivative, we see, 
\begin{align}
\sum_{i=1}^{n} \;\; \sum_{\ell_1 + \ell_2  =i-1} a_i  &\frac{d}{dt} \left[ \left\{(\bX + t\bH)^{\ell_1}\right\}^\top \right]\bigg|_{t=0} \bnabla F(\bG(\bX)) (\bX^{\ell_2})^\top \nonumber \\
&= \sum_{i=2}^{n} \;\; \sum_{\ell_1 + \ell_2  =i-1} a_i  \frac{d}{dt} \left[ \left\{(\bX + t\bH)^{\ell_1}\right\}^\top \right]\bigg|_{t=0} \bnabla F(\bG(\bX)) (\bX^{\ell_2})^\top \nonumber \\
&= \sum_{i=2}^{n} \;\; \sum_{\ell_1 + \ell_2  =i-2} a_i  \frac{d}{dt} \left[ \left\{(\bX + t\bH)^{\ell_1 + 1}\right\}^\top \right]\bigg|_{t=0} \bnabla F(\bG(\bX)) (\bX^{\ell_2})^\top \nonumber \\
&= \sum_{i=2}^{n} \;\; \sum_{\ell_1 + \ell_2  =i-2} a_i  \left( \sum_{k_1 + k_2 = \ell_1} (\bX^{k_1})^\top \bH^\top (\bX^{k_2})^\top  \right)\bnabla F(\bG(\bX)) (\bX^{\ell_2})^\top \nonumber \\
&= \sum_{i=2}^{n} \;\; \sum_{\ell_1 + \ell_2 + \ell_3 =i-2} a_i (\bX^{\ell_1})^\top \bH^\top (\bX^{\ell_2})^\top  \bnabla F(\bG(\bX)) (\bX^{\ell_3})^\top. \nonumber
\end{align}
Rewriting the second term in a similar fashion we obtain \eqref{eqn:hess_fcircg_1}. \hfill 
\end{proof}

We now prove the second-order chain rule identity in Theorem \ref{thm:CR_a} with these results. 

\noindent{\it Proof of Theorem \ref{thm:CR_a}.}\; To prove \eqref{eqn:CR_2nd_a}, we only need to demonstrate, 
\begin{multline}\label{eqn:main_eq}
\langle \bnabla^2 (F \circ \bG)(\bX)[\bH], \bH \rangle \\ =  D^2F(\bG(\bX))\bigg[ \bD\bG(\bX)[\bH], \bD\bG(\bX)[\bH]\bigg] + DF(\bG(\bX))\bigg[\bD^2 \bG(\bX)[\bH,\bH]\bigg].
\end{multline}

\noindent The left-hand side of \eqref{eqn:main_eq} is known from Lemma \ref{lem:hess}; therefore, we compute the right-hand side to verify the equality. Looking at the first term, 

\begin{align}\label{eqn:rhs_term1}
 D^2F(\bG(\bX))\bigg[ &\bD\bG(\bX)[\bH], \bD\bG(\bX)[\bH]\bigg] \nonumber \\
 &\overset{}{=} \bigg \langle \bnabla^2 F(\bG(\bX))\bigg[\bD\bG(\bX)[\bH]\bigg], \bD\bG(\bX)[\bH] \bigg \rangle \nonumber \\
 &\overset{}{=}  \bigg \langle \bnabla^2 F(\bG(\bX))\bigg[\bD\bG(\bX)[\bH]\bigg], \sum_{i=1}^{n}\;\; \sum_{\ell_1+\ell_2= i -1} a_i \bX^{\ell_1} \bH \bX^{\ell_2} \bigg \rangle \nonumber \\
 &=  \bigg \langle  \sum_{i=1}^{n}\;\; \sum_{\ell_1+\ell_2= i -1} a_i (\bX^{\ell_1})^\top \bnabla^2 F(\bG(\bX))\bigg[\bD\bG(\bX)[\bH]\bigg] (\bX^{\ell_2})^\top,  \bH  \bigg \rangle.  
\end{align}

\noindent Looking at the second term, 
\begin{align}\label{eqn:2ndRHSinterm}
&DF(\bG(\bX))\bigg[\bD^2 \bG(\bX)[\bH,\bH]\bigg] \nonumber \\
&= \bigg \langle \bnabla F(\bG(\bX)),\; D^2 \bG(\bX)[\bH,\bH] \bigg \rangle \nonumber \\
&= \bigg \langle \bnabla F(\bG(\bX)), \;2  \sum_{i=2}^{n}\;\; \sum_{\ell_1+\ell_2= i-2}\hspace{-0.15in} a_i  \bX^{\ell_1} \bH^2 \bX^{\ell_2} + 2 \sum_{i=2}^{n}\;\;\; \sum_{\ell_1 + \ell_2 + \ell_3 = i-3} \hspace{-0.15in} a_i \bX^{\ell_1} \bH \bX^{\ell_2+1} \bH \bX^{\ell_3} \bigg \rangle. 
\end{align}

\noindent From the properties of the trace we note for all $\ell_1$ and $\ell_2$, 
\begin{align}
\bigg \langle \bnabla F(\bG(\bX)), \bX^{\ell_1} \bH^2 \bX^{\ell_2} \bigg \rangle &= \bigg \langle \bH^\top (\bX^{\ell_1})^\top \bnabla F(\bG(\bX)) (\bX^{\ell_2})^\top, \bH \bigg \rangle  \nonumber \\
&=\bigg \langle (\bX^{\ell_1})^\top \bnabla F(\bG(\bX)) (\bX^{\ell_2})^\top \bH^\top, \bH \bigg \rangle, \nonumber 
\end{align}
and 
\begin{align}
\bigg \langle \bnabla F(\bG(\bX)), \bX^{\ell_1} \bH \bX^{\ell_2} \bH \bX^{\ell_3} \bigg \rangle &= \bigg \langle (\bX^{\ell_2})^\top \bH^\top (\bX^{\ell_1})^\top \bnabla F(\bG(\bX)) (\bX^{\ell_3})^\top, \bH \bigg \rangle  \nonumber \\
&= \bigg \langle  (\bX^{\ell_1})^\top \bnabla F(\bG(\bX)) (\bX^{\ell_3})^\top\bH^\top (\bX^{\ell_2})^\top,  \bH \bigg \rangle. \nonumber 
\end{align}
Using these  alternative manners for expressing these inner products we rewrite \eqref{eqn:2ndRHSinterm} as, 
\begin{align}\label{eqn:term2_RHS_final}
DF(\bG(\bX))\bigg[\bD^2 \bG(\bX)[\bH,\bH]\bigg]=&\bigg \langle \sum_{i=2}^{n}\;\; \sum_{\ell_1+\ell_2= i-2} a_i \bH^\top (\bX^{\ell_1})^\top \bnabla F(\bG(\bX)) (\bX^{\ell_2})^\top, \bH \bigg \rangle \nonumber \\
&+ \bigg \langle \sum_{i=2}^{n}\;\; \sum_{\ell_1+\ell_2= i-2} a_i (\bX^{\ell_1})^\top \bnabla F(\bG(\bX)) (\bX^{\ell_2})^\top \bH^\top, \bH \bigg \rangle \nonumber \\
&+ \bigg \langle \sum_{i=2}^{n}\;\; \sum_{\ell_1 + \ell_2 + \ell_3 = i-3} \hspace{-0.25in} a_i (\bX^{\ell_2 + 1})^\top \bH^\top (\bX^{\ell_1})^\top \bnabla F(\bG(\bX)) (\bX^{\ell_3})^\top, \bH \bigg \rangle \nonumber \\
&+ \bigg \langle \sum_{i=2}^{n}\;\; \sum_{\ell_1 + \ell_2 + \ell_3 = i-3} \hspace{-0.25in} a_i (\bX^{\ell_1})^\top  \bnabla F(\bG(\bX)) (\bX^{\ell_3})^\top \bH^\top (\bX^{\ell_2 + 1})^\top, \bH \bigg \rangle \nonumber \\
=&\bigg \langle \sum_{i=2}^{n}\;\; \sum_{\ell_1+\ell_2 + \ell_3= i-2} \hspace{-0.15in} a_i (\bX^{\ell_1})^\top \bH^\top (\bX^{\ell_2})^\top \bnabla F(\bG(\bX)) (\bX^{\ell_3})^\top, \bH \bigg \rangle \nonumber \\
&+\bigg \langle \sum_{i=2}^{n}\;\; \sum_{\ell_1+\ell_2 + \ell_3= i-2} \hspace{-0.15in} a_i  (\bX^{\ell_1})^\top \bnabla F(\bG(\bX)) (\bX^{\ell_2})^\top \bH^\top (\bX^{\ell_3})^\top, \bH \bigg \rangle.
\end{align}
Thus, from \eqref{eqn:hess_fcircg_1}, \eqref{eqn:rhs_term1} and  \eqref{eqn:term2_RHS_final} we obtain \eqref{eqn:main_eq}. $\hfill \square$

%============================================================

\section{Appendix C: Proofs of Theorems \ref{thm:global_convergence_CRN} and \ref{thm:superlinear_CRN}}\label{sec:Appendix_D} 
We provide a new analysis for the convergence of Algorithm \ref{alg:CRN}. The main subproblem required at each iteration takes the form, 
\begin{equation}\label{eqn:sub_probH}
\min_{\bH \in \mathcal{V}}\; \langle \bnabla F(\bX), \bH \rangle + \frac{1}{2} \langle \bnabla^2 F(\bX)[\bH], \bH \rangle + \frac{M}{6} \|\bH\|_F^3 ,
\end{equation}
where $M>0$. Define $\bH_X$ to be a global minimizer of \eqref{eqn:sub_probH}. We are going to express this subproblem in an equivalent vectorized form for our analysis. We first state the following result which relates the vectorization of $\bH \in \mathcal{V}$ to vectors in $\R^d$
 
\begin{theorem}\label{thm:Amat}
Let $\mathcal{V} \subseteq \mathbb{R}^{n\times n}$ be a vector space of dimension $d \leq n^2$, then there exists a matrix $\bB \in \R^{n^2 \times d}$ such that $\bB^\top \bB = \bm{I}_{d \times d}$ and, 
$
\left\{ \bB \bz \; | \; \bz \in \R^{d} \right\} = \left\{ \vect(\bH) \; | \; \bH \in \mathcal{V} \right\}.
$
\end{theorem}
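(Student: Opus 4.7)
The plan is to construct $\bB$ explicitly from an orthonormal basis of $\mathcal{V}$ under the Frobenius inner product, and then verify the two claimed properties using the fact that vectorization is an inner-product-preserving linear isomorphism from $(\R^{n\times n}, \langle \cdot, \cdot \rangle)$ to $(\R^{n^2}, \cdot^\top \cdot)$.

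First, I would observe that since $\mathcal{V}$ is a $d$-dimensional subspace of $\R^{n \times n}$, applying the Gram--Schmidt procedure with respect to the trace inner product $\langle \cdot, \cdot\rangle$ yields an orthonormal basis $\{\bE_1, \ldots, \bE_d\} \subseteq \mathcal{V}$, so that $\langle \bE_i, \bE_j\rangle = \delta_{ij}$. I then define
\[
\bB := \bigl[\, \vect(\bE_1) \;\big|\; \vect(\bE_2) \;\big|\; \cdots \;\big|\; \vect(\bE_d) \,\bigr] \in \R^{n^2 \times d}.
\]

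Next, I would verify the orthogonality condition $\bB^\top \bB = \bm{I}_{d \times d}$. The $(i,j)$ entry of $\bB^\top \bB$ is $\vect(\bE_i)^\top \vect(\bE_j)$, which equals $\langle \bE_i, \bE_j\rangle = \delta_{ij}$ by the standard identity $\vect(\bA)^\top \vect(\bC) = \trace(\bA^\top \bC)$. This is purely mechanical.

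Finally, I would establish the set equality by a two-way containment. For the forward inclusion, given any $\bz \in \R^d$, linearity of $\vect$ gives $\bB\bz = \sum_{i=1}^d z_i \vect(\bE_i) = \vect\bigl(\sum_{i=1}^d z_i \bE_i\bigr)$, and since $\sum_i z_i \bE_i \in \mathcal{V}$, the vector $\bB\bz$ lies in $\vect(\mathcal{V})$. Conversely, any $\bH \in \mathcal{V}$ admits a unique expansion $\bH = \sum_{i=1}^d z_i \bE_i$ with $z_i = \langle \bE_i, \bH\rangle$, so $\vect(\bH) = \bB\bz$ for this choice of coordinates.

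There is no real obstacle here; the result is essentially a restatement of the fact that every finite-dimensional inner product space admits an orthonormal basis, transported via the Frobenius-isometric map $\vect$. The only subtle point worth being explicit about is that $\vect$ preserves inner products, which is what ensures orthonormality of the columns of $\bB$ (rather than merely their linear independence). Once this is noted, the rest is routine.
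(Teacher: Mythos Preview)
Your proposal is correct and essentially matches the paper's own proof. The only cosmetic difference is that the paper first passes to the vectorized subspace $V := \{\vect(\bH) : \bH \in \mathcal{V}\} \subseteq \R^{n^2}$ and selects an orthonormal basis there, whereas you pick the orthonormal basis in $\mathcal{V}$ and then vectorize; since $\vect$ is an isometry (the point you rightly emphasize), the two constructions coincide.
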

\begin{proof}\; Let $V := \left\{ \vect(\bH) \; | \; \bH \in \mathcal{V} \right\} \subseteq \R^{n^2}$. Then it is easy to check $V$ is a vector subspace of $\R^{n^2}$ with dimension $d$. Let $\{\bb^1, \hdots, \bb^d\} \subseteq \R^{n^2}$ be a basis for $V$ and without loss of generality assume it is an orthonormal basis. Define $\bB$ to be the matrix whose columns are the basis vectors of $V$, i.e. 
\[ 
\bB := \left[ \bb^1 \; | \; \hdots \; | \; \bb^d \right].
\]
By the orthogonality of the basis, we see $\bB^\top \bB = \bm{I}_{d \times d}$ and clearly $\text{Range}(\bB) = V$ by the definition of a basis since, 
\[ 
\bB\bz = \sum_{i=1}^{d} z_i \bb^i.
\] \hfill 
\end{proof}
\vspace{-0.2in}
\noindent We present a simple example for such a matrix $\bB$ in the case of symmetric matrices. 

\begin{example}
If $\cV = \mathcal{S}^{3 \times 3}$, then matrix $\bB$ from Theorem \ref{thm:Amat} could be formed by vectorizing and normalizing the six matrices which form the standard basis for $\mathcal{S}^{3 \times 3}$, i.e. 
\[
\bB = \begin{pmatrix}
	1     &0                  &0     &0     &0     &0 \\
     0       &\sqrt{2}/2     &0     &0     &0     &0 \\
      0     &0                   &\sqrt{2}/2      &0     &0     &0 \\
       0    &\sqrt{2}/2      &0     &0     &0     &0 \\
        0   &0                  &0     &1     &0     &0 \\
         0 &0     &0     &0     &\sqrt{2}/2     &0 \\ 
         0 &0     &\sqrt{2}/2      &0     &0     &0 \\
         0 &0     &0     &0     &\sqrt{2}/2     &0 \\
         0 &0     &0     &0     &0     &1 
	  \end{pmatrix},
\]
where the second and sixth columns, for example, are the normalized versions of the vectors, 
\begin{align}
\vect\left( \begin{pmatrix} 0 & 1 & 0 \\ 1 & 0 & 0 \\ 0 & 0 & 0 \end{pmatrix} \right) = \left( 0,1,0,1,0,0,0,0,0\right)^\top,\;\; 
\vect\left( \begin{pmatrix} 0 & 0 & 0 \\ 0 & 0 & 0 \\ 0 & 0 & 1 \end{pmatrix} \right) = \left( 0,0,0,0,0,0,0,0,1\right)^\top. \nonumber
\end{align}
\end{example} 

\vspace{0.05in}

\noindent We now rewrite \eqref{eqn:sub_probH} as an equivalent vector optimization model using Theorem \ref{thm:Amat}. Vectorizing the individual terms in \eqref{eqn:sub_probH}, using the simple identity $\langle \bX, \bY \rangle = \vect(\bX)^\top \vect(\bY)$, the matrix optimization model is equivalent to, 
\begin{equation}
\min_{\bz \in \R^d}\; \left(\bB^\top \vect(\bnabla F(\bX)) \right)^\top \bz + \frac{1}{2}\bz^\top \left( \bB^\top \bm{A}(\bX) \bB \right)\bz + \frac{M}{6} \|\bz\|^3 , \nonumber 
\end{equation}
where the last term is rewritten using the fact, 
\[
\| \bH \|_F = \| \vect(\bH)\| = \| \bB\bz|\ = \sqrt{ \bz^\top \bB^\top \bB \bz } = \|\bz\|. 
\]
Thus, the vectorized matrix optimization subproblem is, 
\begin{equation}\label{eqn:vec_subprob}
\min_{\bz \in \R^d}\; \bb_X^\top \bz+ \frac{1}{2}\bz^\top \bA_X \bz + \frac{M}{6}\|\bz\|^3,
\end{equation}
where: 
\begin{equation}\label{eqn:b_X}
\bb_X:= \bB^\top  \vect(\bnabla F(\bX)),
\end{equation}
\begin{equation}\label{eqn:A_X}
\bA_X :=  \bB^\top \bA(\bX) \bB.
\end{equation}

Let $\bz_X$ denote the global optimal solution to \eqref{eqn:vec_subprob}. Upon solving \eqref{eqn:vec_subprob}, we obtain the vectorized global optimal solution to \eqref{eqn:sub_probH}, $\vect(\bH_X)$, as $\vect(\bH_X) = \bB \bz_X$. Note $\bz_X$ uniquely maps to $\vect(\bH_X)$ by the orthogonality of $\bB$. Additionally, $\bA_X \in \mathcal{S}^{d \times d}$. Since $D^2 F(\bX)[\bH_1, \bH_2]$ is symmetric in $\bH_1$ and $\bH_2$,
\[
\vect(\bH_1)^\top \bA(\bX) \vect(\bH_2) = \vect(\bH_2)^\top \bA(\bX) \vect(\bH_1), 
\]
for all $\bH_1, \bH_2 \in \cV$ which implies the symmetry of $\bA_X$. The orthogonality of $\bB$ also implies $\bA_X$ is unique though $\bA(\bX)$ is not necessarily uniquely defined from the vectorization. Another property of $\bA_X$ is its relationship to the Lipschitz Hessian of $F$.

\begin{lemma}\label{lem:mat_lipcond}
The function $F: \mathcal{V} \rightarrow \R$ has Lipschitz Hessian with parameter $L>0$ iff,
\[
 \| \bA_X - \bA_Y \|_2 \leq L \| \bX- \bY\|_F \; \text{ for all } \bX,\bY \in \mathcal{V}.
\]
\end{lemma}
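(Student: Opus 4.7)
The plan is to establish the pointwise identity
\[
\|D^2 F(\bX) - D^2 F(\bY)\| = \|\bA_X - \bA_Y\|_2 \quad \text{for all } \bX, \bY \in \mathcal{V},
\]
after which the biconditional in the lemma follows immediately by dividing by $\|\bX - \bY\|_F$ (when the two matrices differ). Thus the whole task reduces to translating the matrix operator norm into the spectral norm of the vectorized Hessian via the basis matrix $\bB$ from Theorem \ref{thm:Amat}.

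First I would rewrite the bilinear form $D^2 F(\bX)[\bH_1, \bH_2]$ in vectorized coordinates. Using \eqref{eqn:2nd_order_DD} together with the vectorization identity \eqref{eqn:hess_vec},
\[
D^2 F(\bX)[\bH_1, \bH_2] = \langle \bnabla^2 F(\bX)[\bH_1], \bH_2 \rangle = \vect(\bH_2)^\top \bA(\bX)\, \vect(\bH_1).
\]
Next I would parameterize $\bH_1, \bH_2 \in \mathcal{V}$ through Theorem \ref{thm:Amat}: write $\vect(\bH_i) = \bB \bz_i$ with $\bz_i \in \R^d$, and observe that $\bB^\top \bB = \bm{I}_{d \times d}$ gives $\|\bH_i\|_F = \|\vect(\bH_i)\| = \|\bz_i\|$. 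Substituting yields $D^2 F(\bX)[\bH_1, \bH_2] = \bz_2^\top \bA_X \bz_1$, and subtracting the analogous identity at $\bY$ gives the same expression with $\bA_X - \bA_Y$.

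The correspondence $\bH \leftrightarrow \bz$ induced by $\bB$ is a bijection between $\{\bH \in \mathcal{V}: \|\bH\|_F = 1\}$ and $\{\bz \in \R^d: \|\bz\| = 1\}$ because $\text{Range}(\bB) = \{\vect(\bH): \bH \in \mathcal{V}\}$ and $\bB$ has orthonormal columns. Taking the maximum over this unit set then yields
\[
\|D^2 F(\bX) - D^2 F(\bY)\| = \max_{\|\bz_1\| = \|\bz_2\| = 1} \bigl|\bz_2^\top (\bA_X - \bA_Y)\bz_1\bigr|,
\]
and the right-hand side is precisely the variational characterization of the spectral norm of the symmetric matrix $\bA_X - \bA_Y$ (symmetry of $\bA_X - \bA_Y$ was already established in the paragraph preceding the lemma, from the symmetry of the bilinear form $D^2F(\bX)[\cdot,\cdot]$). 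This delivers the desired identity.

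I do not anticipate a serious obstacle: the argument is essentially bookkeeping between the matrix inner product $\langle \cdot, \cdot \rangle$ on $\mathcal{V}$ and the Euclidean inner product on $\R^d$ transported through $\bB$. The one point that deserves care is confirming that restricting the suprema to $\mathcal{V}$ (rather than all of $\R^{n \times n}$) corresponds exactly to restricting the vector suprema to $\R^d$, which is why the orthonormality $\bB^\top \bB = \bm{I}_{d \times d}$ and the range identification $\text{Range}(\bB) = \{\vect(\bH): \bH \in \mathcal{V}\}$ from Theorem \ref{thm:Amat} are both needed, and why $\bA_X = \bB^\top \bA(\bX) \bB$ rather than $\bA(\bX)$ itself is the right object to compare with $D^2 F$.
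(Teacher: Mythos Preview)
Your proposal is correct and follows essentially the same route as the paper: both arguments establish the pointwise identity $\|D^2 F(\bX) - D^2 F(\bY)\| = \|\bA_X - \bA_Y\|_2$ by unwinding the operator-norm definition, vectorizing via $\bA(\bX)$, and then using the orthonormal-column matrix $\bB$ from Theorem~\ref{thm:Amat} to pass to the spectral norm on $\R^d$. Your exposition is slightly more explicit about why the unit spheres correspond bijectively, but the substance is identical.
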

\begin{proof}{Let $\bX, \bY \in \mathcal{V}$. Then, 
\begin{align}
\| D^2& F(\bX) - D^2 F(\bY) \| \nonumber \\
&= \underset{\bH_i \in \cV}{\max}\; \left\{ \big| D^2 F(\bX)[\bH_1, \bH_2] - D^2 F(\bY)[\bH_1, \bH_2] \big| \; \bigg| \;  \|\bH_1\|_F = \|\bH_2\|_F = 1 \right\} \nonumber \\
&= \underset{\bH_i \in \cV}{\max} \; \left\{ \big|\langle \bnabla^2 F(\bX)[\bH_1] - \bnabla^2 F(\bY)[\bH_1], \bH_2 \rangle \big| \; \bigg| \;  \|\bH_1\|_F = \|\bH_2\|_F = 1 \right\} \nonumber \\
&= \underset{\bH_i \in \cV}{\max} \; \left\{ \big| \vect(\bH_1)^\top \left( \bA(\bX) - \bA(\bY) \right)^\top \vect(\bH_2) \big| \; \bigg| \;  \|\vect(\bH_1)\| = \|\vect(\bH_2)\| = 1 \right\} \nonumber \\
&= \underset{\bz_i \in \R^d}{\max} \; \left\{ \big| \bz_1^\top \left( \bA_X- \bA_Y\right)^\top \bz_2 \big| \; \bigg| \;  \|\bz_1\| = \|\bz_2\| = 1 \right\} \nonumber \\
&= \; \|\bA_X - \bA_Y\|_2. \nonumber
\end{align}
}
\end{proof}

Additionally, $\bb_X$ and $\bA_X$ relate directly to the second-order $\epsilon$-stationary conditions for \eqref{eqn:crn_prob}.
\begin{lemma}\label{esp_lemma}
The point $\bX^* \in \cV \subseteq \R^{n\times n}$ is a second-order $\epsilon$-stationary point of \eqref{eqn:crn_prob} iff
\[
\|\bb_{\bX^*}\| \leq \epsilon \; \text{ and }\; \lambda_n(\bA_{\bX^*}) \geq -\sqrt{\epsilon}. 
\]
\end{lemma}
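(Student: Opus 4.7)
The plan is to translate each of the two second-order $\epsilon$-stationarity conditions through the orthonormal basis matrix $\bB$ of Theorem \ref{thm:Amat}, exploiting the identity $\bB^\top\bB = \bm{I}_{d\times d}$ and the fact that $\mathrm{vec}(\mathcal{V}) = \mathrm{Range}(\bB)$. I would handle the two conditions independently and stitch them together at the end.

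First, for the gradient condition, I would observe that since $F$ is defined on the subspace $\mathcal{V}$ and $\bnabla F(\bX^*)$ is the unique element of $\mathcal{V}$ representing $DF(\bX^*)$ via \eqref{eqn:dir_deriv_grad}, we have $\vect(\bnabla F(\bX^*)) \in \mathrm{Range}(\bB)$. Thus there exists $\bz_g \in \R^d$ with $\vect(\bnabla F(\bX^*)) = \bB\bz_g$. Using \eqref{eqn:b_X}, $\bb_{\bX^*} = \bB^\top\bB\bz_g = \bz_g$, and orthonormality of $\bB$ yields $\|\bb_{\bX^*}\| = \|\bz_g\| = \|\bB\bz_g\| = \|\vect(\bnabla F(\bX^*))\| = \|\bnabla F(\bX^*)\|_F$. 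Hence $\|\bnabla F(\bX^*)\|_F \leq \epsilon$ if and only if $\|\bb_{\bX^*}\| \leq \epsilon$.

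Second, for the Hessian condition, I would use the bijection $\bH \leftrightarrow \bz$ given by $\vect(\bH) = \bB\bz$ between $\mathcal{V}$ and $\R^d$. Under this bijection $\|\bH\|_F = \|\bB\bz\| = \|\bz\|$, and from \eqref{eqn:hess_vec} together with \eqref{eqn:A_X},
\begin{equation*}
\langle \bnabla^2 F(\bX^*)[\bH], \bH\rangle
= \vect(\bH)^\top \bA(\bX^*)\vect(\bH)
= \bz^\top \bB^\top \bA(\bX^*)\bB\,\bz
= \bz^\top \bA_{\bX^*}\bz.
\end{equation*}
Consequently, $\langle \bnabla^2 F(\bX^*)[\bH],\bH\rangle \geq -\sqrt{\epsilon}\|\bH\|_F^2$ for every $\bH \in \mathcal{V}$ is the same as $\bz^\top \bA_{\bX^*}\bz \geq -\sqrt{\epsilon}\|\bz\|^2$ for every $\bz \in \R^d$. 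Since $\bA_{\bX^*}$ is symmetric (by the symmetry of $D^2F(\bX^*)$), the Courant--Fischer/Rayleigh-quotient characterisation gives that this holds if and only if $\lambda_{\min}(\bA_{\bX^*}) \geq -\sqrt{\epsilon}$, i.e.\ $\lambda_n(\bA_{\bX^*}) \geq -\sqrt{\epsilon}$ in the paper's eigenvalue-ordering convention.

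Combining these two equivalences gives the lemma in both directions. There is no real obstacle here; the only subtlety worth flagging explicitly is the need to verify that $\vect(\bnabla F(\bX^*))$ actually lies in $\mathrm{Range}(\bB)$, which follows because the gradient of a function on a vector subspace is defined as the Riesz representer of the directional derivative \emph{within} that subspace, and hence lives in $\mathcal{V}$ itself. Everything else reduces to bookkeeping with the isometry $\bB$.
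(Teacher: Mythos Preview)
Your proposal is correct and follows essentially the same approach as the paper: both arguments use that $\vect(\bnabla F(\bX^*))\in\mathrm{Range}(\bB)$ together with $\bB^\top\bB=\bm{I}_{d\times d}$ to obtain $\|\bb_{\bX^*}\|=\|\bnabla F(\bX^*)\|_F$, and both translate the Hessian condition through the bijection $\vect(\bH)=\bB\bz$ to the Rayleigh-quotient characterisation of $\lambda_n(\bA_{\bX^*})$. If anything, your treatment of the gradient equality is a bit more direct than the paper's two-sided inequality argument, but the underlying ideas are identical.
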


\begin{proof}\; By Theorem \ref{thm:Amat}, there exists an unique $\hat{\bz} \in \R^d$ such that $\bB\hat{\bz} = \vect(\bnabla F(\bX^*)$. By the definition of $\bb_X$ and the triangle inequality for matrix-vector multiplication, 
\[
\|\bb_{\bX^*}\| = \|\hat{\bz}\|  = \|\bB\|_2 \cdot \|\hat{\bz}\| \geq \|\bB \hat{\bz}\| = \| \bnabla F(\bX^*)\|_F.
\]
The second direction $\|\bb_{\bX^*}\| \leq  \| \bnabla F(\bX^*)\|_F$ follows immediately. Thus, $\|\bb_{X^*}\| =  \| \bnabla F(\bX^*)\|_F$ and both directions used the fact $\|\bB\|_2 = \|\bB^\top\|_2 = 1$. By Theorem \ref{thm:Amat} and the definitions of $\bA(\bX)$ and $\bA_X$, 
\begin{align}
\langle \bnabla^2 F(\bX)[\bH], \bH \rangle  &\geq - \sqrt{\epsilon} \| \bH\|_F^2,\;\; \forall \bH \in \cV \nonumber \\
\iff \vect(\bH)^\top \bA(\bX) \vect(\bH) &\geq -\sqrt{\epsilon} \| \vect(\bH)\|_F^2,\;\; \forall \bH \in \cV  \nonumber \\
\iff  \bz^\top \bA_X \bz &\geq -\sqrt{\epsilon}\|\bz\|^2, \;\; \forall \bz \in \R^d. \nonumber 
\end{align}
Hence, $\bX^*$ satisfies the secondary-order condition if and only if $\lambda_n\left( \bA_{\bX^*}\right)  \geq -\sqrt{\epsilon}$.\hfill
\end{proof}

We now connect \eqref{eqn:sub_probH} to the exact updating step for the cubic-regularized Newton method. For $M>0$, let, 
	\begin{equation}\label{eqn:subprob}
		\bT_{M}(\bX) \in \underset{{\bY \in \cV}}{\text{Argmin}}\left[ \langle \bnabla F(\bX), \bY-\bX \rangle + \frac{1}{2} \langle \bnabla^2 F(\bX)[\bY-\bX], \bY-\bX \rangle + \frac{M}{6} \|\bY-\bX\|_F^3 \right]
	\end{equation}
We note $\bT_M(\bX)$ and $\bH_X$ can be related by the equation, 
\begin{equation}\label{eqn:TM_H_relation}
\bT_M(\bX) = \bX + \bH_X. 
\end{equation}
Investigating \eqref{eqn:subprob} further, the first-order optimality condition for the problem requires, 
\begin{equation}\label{eqn:1stOrderOC}
\bnabla F(\bX) + \bnabla^2F(\bX)[\bT_M(\bX)-\bX] + \frac{M}{2}(\bT_M(\bX)-\bX)\|\bT_M(\bX)-\bX\|_F = 0.
\end{equation}
From this condition, we obtain the following useful bound. 
\begin{lemma}\label{lem:grad_bdd}
If $\bT_M(\bX)$ is well-defined, i.e. it exists and is bounded, then for all $\bX \in \mathcal{V}$, 
\begin{equation}\label{eqn:Nes_Lemma3}
\|\bnabla F(\bT_M(\bX))\|_F \leq \frac{1}{2}(M+L) \|\bH_X\|_F^2.
\end{equation}
\end{lemma}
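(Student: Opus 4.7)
The plan is to combine the first-order optimality condition \eqref{eqn:1stOrderOC} for the subproblem \eqref{eqn:subprob} with the Lipschitz Hessian inequality \eqref{eqn:lip_ineq_1} from Lemma \ref{lem:hess_lip}, and then conclude by a single application of the triangle inequality in the Frobenius norm.

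First, I would substitute the relation \eqref{eqn:TM_H_relation}, namely $\bT_M(\bX) - \bX = \bH_X$, into the first-order condition \eqref{eqn:1stOrderOC} to rewrite it as
\[
\bnabla F(\bX) + \bnabla^2 F(\bX)[\bH_X] = -\frac{M}{2}\,\bH_X\,\|\bH_X\|_F.
\]
This identifies the quantity $\bnabla F(\bX) + \bnabla^2 F(\bX)[\bH_X]$ with something whose Frobenius norm is exactly $\tfrac{M}{2}\|\bH_X\|_F^2$.

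Next, I would invoke inequality \eqref{eqn:lip_ineq_1} with $\bY = \bT_M(\bX) = \bX + \bH_X$, which yields
\[
\bigl\|\bnabla F(\bT_M(\bX)) - \bnabla F(\bX) - \bnabla^2 F(\bX)[\bH_X]\bigr\|_F \;\leq\; \frac{L}{2}\,\|\bH_X\|_F^2.
\]
Substituting the identity from the previous step into the left-hand side gives
\[
\bigl\|\bnabla F(\bT_M(\bX)) + \tfrac{M}{2}\,\bH_X\,\|\bH_X\|_F\bigr\|_F \;\leq\; \tfrac{L}{2}\,\|\bH_X\|_F^2.
\]

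Finally, by the triangle inequality,
\[
\|\bnabla F(\bT_M(\bX))\|_F \;\leq\; \tfrac{L}{2}\,\|\bH_X\|_F^2 + \tfrac{M}{2}\,\|\bH_X\|_F \cdot \|\bH_X\|_F \;=\; \tfrac{1}{2}(M+L)\,\|\bH_X\|_F^2,
\]
which is the desired bound. There is no real obstacle here: the content of the lemma is exactly that the optimality condition of the cubic model matches the gradient at $\bT_M(\bX)$ up to the Lipschitz Hessian error term, and the proof is just a one-line algebraic combination of the two facts established previously. The only thing to verify is that all the norms being used live in the matrix vector space $\cV$ and that the Lipschitz estimate applies on $\cV$, both of which follow from the assumptions in Section \ref{sec:CRN_for_matrix_opt}.
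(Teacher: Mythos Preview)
Your proof is correct and follows essentially the same approach as the paper: both combine the first-order optimality condition \eqref{eqn:1stOrderOC} with the Lipschitz Hessian bound \eqref{eqn:lip_ineq_1} and conclude via the triangle inequality. The only cosmetic difference is that the paper takes norms of the two displayed relations separately and then adds, whereas you substitute the optimality identity into the Lipschitz bound first; the content is identical.
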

\vspace{-0.05in}
\begin{proof}\; Let $\bX \in \cV$; from \eqref{eqn:1stOrderOC} it follows, 
\[
\|\bnabla F(\bX) + \bnabla^2F(\bX)[\bT_M(\bX)-\bX]\|_F = \frac{M}{2} \|\bT_M(\bX) - \bX\|_F^2, 
\]
and by \eqref{eqn:lip_ineq_1}, 
\[
\| \bnabla F(\bT_M(\bX)) - \bnabla F(\bX) - \bnabla^2 F(\bX)[\bT_M(\bX) - \bX ] \|_F \leq \frac{L}{2}\|\bT_M(\bX) - \bX\|_F^2.
\]
Replacing $\bT_M(\bX) - \bX$ with $\bH_X$ by \eqref{eqn:TM_H_relation}, adding the new inequalities, and applying the triangle inequality yields the result.\hfill
\end{proof}

Lemmas \ref{esp_lemma} and \ref{lem:grad_bdd} enable a potential function to be defined which directly mirrors the potential function in Section 3 of \cite{Nes06} where $\bb_X$ and $\bA_X$ replace the gradient and Hessian of a real-valued function defined on $\R^n$. Let, 
\begin{equation}\label{eqn:potential_func}
\mu_M(\bX) := \max \left\{\sqrt{\frac{2}{L+M}\|\bb_X\|_2},\;\;\; \frac{-2}{2L + M} \lambda_n(\bA_X)   \right\}.
\end{equation}
By construction $\mu_M(\bX)$ relates directly to the second-order $\epsilon$-stationality conditions. To be precise, if $\mu_M(\bX) \leq \max\left\{\sqrt{\frac{2\epsilon}{L+M}},\; \frac{-2\epsilon}{2L + M}\right\}$, then $\bX$ is a second-order $\epsilon$-stationary point. The next lemma relates $\mu_M(\bX)$ and the norm of the solution to the vectorized subproblem \eqref{eqn:vec_subprob}.

\begin{lemma}\label{lem:potential_func_bdd}
If $\bz_X$ is a global optimal solution to \eqref{eqn:vec_subprob},
then $\mu_M(\bT_M(\bX)) \leq \|\bz_X\|$. 
\end{lemma}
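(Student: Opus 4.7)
The plan is to handle the two arguments inside the maximum defining $\mu_M(\bT_M(\bX))$ separately, showing each is bounded by $\|\bz_X\|$, and then use the relation $\|\bH_X\|_F = \|\bz_X\|$ (which follows from $\vect(\bH_X) = \bB\bz_X$ together with $\bB^\top \bB = \bm{I}_{d\times d}$) as the bridge between the vectorized subproblem and quantities living on $\mathcal{V}$. Concretely, I need to verify
\[
\|\bb_{\bT_M(\bX)}\| \;\leq\; \tfrac{L+M}{2}\|\bz_X\|^2 \quad\text{and}\quad \lambda_n(\bA_{\bT_M(\bX)}) \;\geq\; -\tfrac{2L+M}{2}\|\bz_X\|,
\]
and then substituting these into \eqref{eqn:potential_func} and taking square roots / signs yields the claim.

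The first inequality is the straightforward part. By Lemma \ref{esp_lemma} applied at $\bT_M(\bX)$, $\|\bb_{\bT_M(\bX)}\| = \|\bnabla F(\bT_M(\bX))\|_F$, and Lemma \ref{lem:grad_bdd} immediately gives the bound $\tfrac{1}{2}(L+M)\|\bH_X\|_F^2$. Using $\|\bH_X\|_F=\|\bz_X\|$ closes this case.

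The second (Hessian) inequality is where the main work lies, and it is the step I expect to require the most care. The idea is to combine the second-order optimality condition of the vectorized cubic-regularized subproblem \eqref{eqn:vec_subprob} at its global minimizer $\bz_X$, which is the classical Nesterov--Polyak condition
\[
\bA_X + \tfrac{M}{2}\|\bz_X\|\,\bm{I}_{d\times d} \;\succeq\; 0,
\]
with the Lipschitz control from Lemma \ref{lem:mat_lipcond}, namely
\[
\|\bA_{\bT_M(\bX)} - \bA_X\|_2 \;\leq\; L\,\|\bT_M(\bX)-\bX\|_F \;=\; L\,\|\bH_X\|_F \;=\; L\,\|\bz_X\|.
\]
Adding these two matrix inequalities yields $\bA_{\bT_M(\bX)} \succeq -\bigl(\tfrac{M}{2}+L\bigr)\|\bz_X\|\,\bm{I}_{d\times d}$, which is exactly the bound on $\lambda_n(\bA_{\bT_M(\bX)})$ needed. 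The main obstacle here is justifying the global second-order condition for \eqref{eqn:vec_subprob}; since $\bz_X$ is assumed to be a global minimizer of the cubic-regularized problem, this is standard (see, e.g., the theory in \cite{Nes06,cristofari2019global}) and I would simply cite it rather than re-derive it.

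Putting the two pieces together: the first bound rearranges to $\sqrt{\tfrac{2}{L+M}\|\bb_{\bT_M(\bX)}\|} \leq \|\bz_X\|$ and the second to $\tfrac{-2}{2L+M}\lambda_n(\bA_{\bT_M(\bX)}) \leq \|\bz_X\|$. Taking the maximum of the left-hand sides gives $\mu_M(\bT_M(\bX)) \leq \|\bz_X\|$, completing the proof. The only non-routine ingredient is the invocation of the known global second-order optimality condition for cubic-regularized quadratics, which is a well-established result in the Euclidean setting and carries over verbatim because we have already reduced the matrix subproblem to its vectorized form.
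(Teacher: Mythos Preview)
Your proposal is correct and follows essentially the same approach as the paper's proof: both handle the gradient term via Lemma~\ref{lem:grad_bdd} together with $\|\bH_X\|_F=\|\bz_X\|$, and both handle the Hessian term by combining the global second-order optimality condition $\bA_X + \tfrac{M}{2}\|\bz_X\|\,\bm{I}\succeq 0$ for the vectorized cubic subproblem (the paper cites Proposition~1 of \cite{Nes06}) with the Lipschitz bound from Lemma~\ref{lem:mat_lipcond}. The only cosmetic difference is that you invoke Lemma~\ref{esp_lemma} for the identity $\|\bb_{\bT_M(\bX)}\|=\|\bnabla F(\bT_M(\bX))\|_F$, whereas the paper writes out the inequality $\|\bB^\top\vect(\cdot)\|\leq\|\bB^\top\|_2\|\cdot\|$ directly.
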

\begin{proof}\; From Lemma \ref{lem:grad_bdd} and the definition of $\bb_{\bX}$ we see, 
\begin{align}
\| \bb_{\bT_M(\bX)}\| &= \| \bB^\top \vect(\bnabla F(\bT_M(\bX)))\| \nonumber \\
				        &\leq \|\bB^\top\|_2 \cdot \|\bnabla F(\bT_M(\bX))\|_F \nonumber \\
					  &\leq \frac{1}{2}\left( L + M \right) \|\bH_X\|_F^2 \nonumber \\
					  &=\frac{1}{2}\left( L + M \right)\|\bz_X\|^2, \nonumber
\end{align}
where the last equality follows from the fact $\vect(\bH_X) = \bB \bz_X$. By Proposition 1 in \cite{Nes06}, 
\begin{equation}\label{eqn:prop1}
\bA_X + \frac{M}{2}\|\bz_X\| \bm{I} \succeq 0.
\end{equation}
Thus, with the Hessian of $F$ Lipschitz continuous, Lemma 
\ref{lem:mat_lipcond} yields, 
\begin{align}
\bA_{\bT_{M}(\bX)} &\succeq \bA_{X} - L \|\bH_X\|_F \bm{I} \nonumber  \\
			       &= \bA_{\bX} - L \|\bz_X\|_2 \bm{I} \nonumber \\
			       &\succeq - \left( \frac{M}{2} + L \right) \|\bz_X\| \bm{I}, \nonumber
\end{align}
where the last relation follows from \eqref{eqn:prop1}. Therefore, 
\[
\lambda_n(\bA_{\bT_M(\bX)}) \geq -  \left( \frac{M}{2} + L \right) \|\bz_X\|. 
\]
\hfill 
\vspace{-0.1in}
\end{proof}

With these results in hand, we prove Theorem \ref{thm:global_convergence_CRN}. The proof combines the arguments presented in \cite{Nes06,Jun18}, but the argument follows from the lemmas relating the matrix subproblem to the vectorized subproblem \\

\noindent{ \it Proof of Theorem \ref{thm:global_convergence_CRN}.}\;
By Lemma \ref{lem:hess_lip} and \eqref{eqn:TM_H_relation}, 
\begin{align}
F(\bX_{k+1}) & \leq F(\bX_k) + \langle F(\bX_k), \bH_{\bX_k} \rangle + \frac{1}{2} \langle \bnabla^2 F(\bX_k)[\bH_{\bX_k}], \bH_{\bX_k} \rangle + \frac{L}{6}\| \bH_{\bX_k}\|_F^3 \nonumber \\
			  &= F(\bX_k) + \bb_{\bX_k}^\top \bz_X + \frac{1}{2} \bz_{\bX_k}^\top \bA_{\bX_k} \bz_{\bX_k} + \frac{L}{6} \|\bz_{\bX_k}\|^3 \nonumber \\
			  &= F(\bX_k) + \bb_{\bX_k}^\top \bz_X + \frac{1}{2} \bz_{\bX_k}^\top \bA_{\bX_k} \bz_{\bX_k} + \left(\frac{L}{6}-\frac{M_k}{2} \right) \| \bz_{\bX_k}\|^3  - \bb_{\bX_k}^\top \bz_{\bX_k}  -  \bz_{\bX_k}^\top \bA_{\bX_k} \bz_{\bX_k} \nonumber \\
			  &= F(\bX_k) - \frac{1}{2} \bz_{\bX_k}^\top \bA_{\bX_k} \bz_{\bX_k}  + \left(\frac{L}{6}-\frac{M_k}{2} \right) \|\bz_{\bX_k}\|^3 \nonumber,
\end{align}
where the second equality comes from the optimality conditions of \eqref{eqn:vec_subprob}. Then, 
\[
F(\bX_{k+1}) \leq F(\bX_k) - \frac{1}{2} \bz_{\bX_k}^\top \left( \bA_{\bX_k} + \frac{M}{2}\|\bz_{\bX_k}\| \bm{I} \right) \bz_{\bX_k} - \left( \frac{M_k}{4} - \frac{L}{6} \right)\|\bz_{\bX_k}\|^3. 
\]
If $M_k > 2L/3$, then it follows $M/4 > L/6$ and it follows $F(\bX_{k+1}) \leq F(\bX_k)$ by \eqref{eqn:prop1}, and the inequality will be strict provided $\bz_{\bX_k} \neq 0$. Hence, letting $M_k \geq L$, we have, 
\[
F(\bX_{k+1}) - F(\bX_k)  \leq -\frac{L}{12}\|\bz_{\bX_k}\|^3. 
\]
Adding this inequality up from $k=0$ to $K-1$ we have, 
\[
\frac{L}{12} \sum_{k=0}^{K-1} \|\bz_{X_k}\|^3 \leq F(\bX_0) - F^*.
\]
Therefore, it follows 
\[
\sum_{k=0}^{\infty} \|\bz_{\bX_k}\|^3 \leq \frac{12 (F(\bX_0) - F^*)}{L},
\]
From Lemma \ref{lem:potential_func_bdd} we have $\mu_{M_k}(\bX_{k+1}) \leq \|\bz_{X_k}\|$ for all $k$, and additionally for any $k \geq 0$, 
\[
\frac{3}{4} \mu_{L}(\bX_{k+1}) \leq \mu_{2L}(\bX_{k+1})  \leq \mu_{M_k}( \bX_{k+1}) \leq \|\bz_{\bX_k}\|.
\]
Therefore, $\lim_{k\rightarrow \infty} \mu_{L}(\bX_{k+1})  = 0$ since $ \bz_{X_k}$ must limit to the zero vector, and, 
\[
\frac{L}{12} \sum_{k=0}^{K-1} \|\bz_{X_k}\|^3 \geq \frac{L}{12} \sum_{k=0}^{K-1} \left( \frac{3}{4}\right)^3 \mu_L(\bX_{k+1})^3 \geq \frac{L}{12} \left( \frac{3}{4}\right)^3  K \left( \min_{1 \leq k \leq K} \; \mu_L(\bX_k) \right)^3, 
\]
which implies,
\[
\min_{1 \leq k \leq K} \; \mu_L(\bX_k) \leq \frac{8}{3} \left( \frac{3}{2} \cdot \frac{F(\bX_0) - F^*}{KL} \right)^{1/3}.
\]
$\hfill \square$ 

We now prove the superlinear convergence rate of Algorithm \ref{alg:CRN} near non-degenerate local minimum described by Theorem \ref{thm:superlinear_CRN}. With our proven lemmas, the proof differs slightly from the argument provided for Theorem 3 in \cite{Nes06}. We provide the necessary links to allow their argument to hold. First, let,
\[
\delta_k = \frac{L \|\bb_{\bX_k}\|}{\lambda_{\min}^2(\bm{A}_{\bX_k})}, 
\]
which is a mirror image to the definition in Theorem 3 of \cite{Nes06}. 
\begin{theorem}\label{thm:superlinear_CRN_a} If Algorithm \ref{alg:CRN} is initialized sufficiently close to a non-degenerate local minimum $\bX^* \in \cV$, then the sequence $\{\bX_k\}$ generated by Algorithm \ref{alg:CRN} will convergence quadratically to $\bX^*$. 
\end{theorem}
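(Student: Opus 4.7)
The plan is to transplant Nesterov and Polyak's argument from Theorem~3 of \cite{Nes06} into the matrix setting by working entirely through the vectorized subproblem \eqref{eqn:vec_subprob}. The key quantity to control is $\delta_k := L\|\bb_{\bX_k}\|/\lambda_{\min}^2(\bA_{\bX_k})$; the target is to show $\delta_{k+1}\le c\,\delta_k^2$ on a neighborhood of $\bX^*$ and then to deduce quadratic convergence of $\bX_k$ itself. First I would translate the non-degeneracy hypothesis into vectorized form: by the same equivalence used inside the proof of Lemma \ref{esp_lemma}, $\langle \bnabla^2 F(\bX^*)[\bH],\bH\rangle \ge \delta\|\bH\|_F^2$ for all $\bH\in\cV$ is equivalent to $\lambda_{\min}(\bA_{\bX^*}) \ge \delta$, and $\bnabla F(\bX^*) = 0$ gives $\bb_{\bX^*}=0$. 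Lemma \ref{lem:mat_lipcond} then produces a neighborhood $\mathcal{N}$ of $\bX^*$ on which $\lambda_{\min}(\bA_{\bX})\ge \delta/2$; shrinking $\mathcal{N}$ further and invoking the descent property of Algorithm \ref{alg:CRN} together with Theorem \ref{thm:global_convergence_CRN}, I may assume $\bX_k\in\mathcal{N}$ for all $k$ once $\bX_0\in\mathcal{N}$.

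Next I would derive the quadratic-contraction estimate. The first-order optimality condition for \eqref{eqn:vec_subprob},
\[
\bb_{\bX_k} + \bigl(\bA_{\bX_k} + \tfrac{M_k}{2}\|\bz_{\bX_k}\|\bm{I}\bigr)\bz_{\bX_k} = 0,
\]
combined with $\lambda_{\min}(\bA_{\bX_k})\ge\delta/2$, gives $\|\bz_{\bX_k}\|\le 2\|\bb_{\bX_k}\|/\delta$. Lemma \ref{lem:grad_bdd}, together with the identity $\|\bb_X\|=\|\bnabla F(\bX)\|_F$ extracted from the proof of Lemma \ref{esp_lemma}, yields $\|\bb_{\bX_{k+1}}\|\le \tfrac{3L}{2}\|\bz_{\bX_k}\|^2$ since $M_k\le 2L$. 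Lemma \ref{lem:mat_lipcond} bounds the spectral perturbation via $\lambda_{\min}(\bA_{\bX_{k+1}}) \ge \lambda_{\min}(\bA_{\bX_k})-L\|\bz_{\bX_k}\|$, which stays at least $\lambda_{\min}(\bA_{\bX_k})/2$ once $\mathcal{N}$ is small enough that $L\|\bz_{\bX_k}\|\le\lambda_{\min}(\bA_{\bX_k})/2$. Assembling these three bounds produces
\[
\delta_{k+1} \;\le\; \frac{L\cdot\tfrac{3L}{2}\|\bz_{\bX_k}\|^2}{\tfrac14\lambda_{\min}^2(\bA_{\bX_k})} \;\le\; 6\,\delta_k^2.
\]

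Finally I would convert the quadratic decay of $\delta_k$ into quadratic decay of $\|\bX_k-\bX^*\|_F$. Since $\lambda_{\min}(\bA_{\bX_k})$ is bounded away from zero, $\delta_{k+1}\le 6\delta_k^2$ forces $\|\bb_{\bX_k}\|$, and hence $\|\bz_{\bX_k}\|$, to decay doubly exponentially; writing $\bX_k-\bX^* = -\sum_{j\ge k}\bz_{\bX_j}$ in vectorized form and summing dominates by a constant times $\|\bz_{\bX_k}\|$. Combined with the one-sided estimate $\|\bb_{\bX_k}\|\ge(\delta/4)\|\bX_k-\bX^*\|_F$, valid in $\mathcal{N}$ by $\bnabla F(\bX^*)=0$ and Lemma \ref{lem:hess_lip}, this yields $\|\bX_{k+1}-\bX^*\|_F \le c'\|\bX_k-\bX^*\|_F^2$. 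The main obstacle I anticipate is not any individual inequality but the careful coordination of neighborhood radii so that simultaneously the spectral lower bound on $\bA_{\bX_k}$ survives, $L\|\bz_{\bX_k}\|$ remains at most half of $\lambda_{\min}(\bA_{\bX_k})$ at every iteration, and the iterates never escape $\mathcal{N}$; once these sizes are synchronized, the vectorization lemmas already in place make the calculation essentially identical to the Euclidean argument of \cite{Nes06}.
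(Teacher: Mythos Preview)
Your proposal is correct and follows essentially the same route as the paper: both arguments transplant Nesterov--Polyak's Theorem~3 via the vectorized quantities $\bb_{\bX_k}$ and $\bA_{\bX_k}$, derive $\|\bz_{\bX_k}\|\le\|\bb_{\bX_k}\|/\lambda_{\min}(\bA_{\bX_k})$ from the subproblem optimality condition, control the spectral shift through Lemma~\ref{lem:mat_lipcond}, and combine with Lemma~\ref{lem:grad_bdd} to obtain $\delta_{k+1}\le c\,\delta_k^2$ (the paper gets $c=8/3$ using the sharper factor $(1-\delta_k)^{-2}$ rather than your fixed $1/4$, but this is immaterial). The one place your write-up is slightly looser than the paper's is the justification that the iterates remain in $\mathcal{N}$: invoking descent plus Theorem~\ref{thm:global_convergence_CRN} does not by itself trap the sequence near $\bX^*$, and the clean fix---which you yourself flag as the main obstacle---is the inductive one the paper uses, namely assume $\delta_k\le 1/4$, deduce $\delta_{k+1}\le 1/4$, and sum the resulting step bounds to confine all iterates.
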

\begin{proof}{
Assume $\bX_k \in \cV$ such that $\|D^2F(\bX_k)\|>0$ and $\delta_k \leq \frac{1}{4}$ for some $k \geq 0$. Then, by the characterization of the optimal solutions of \eqref{eqn:vec_subprob} in Theorem 3.1 of \cite{Ct11a} and the definitions of $\bH_{\bX_k}$ and $\bz_{\bX_k}$, 
\[
\|\bH_{\bX_k}\|_F = \|\bz_{\bX_k}|| = \|\left( \bm{A}_{\bX_k} + \frac{M_k}{2}\|\bz_{\bX_k}\| \bm{I} \right)^{-1} \bb_{\bX_k}\| \leq \frac{\|\bb_{\bX_k}\|}{\lambda_{\min}(\bA_{\bX_k})}.
\]
By Lemma \ref{lem:mat_lipcond}, we know $\bm{A}_{\bX_{k+1}} \succeq \bm{A}_{\bX_{k}} - \|z_{\bX_k}\| L \bm{I}$. Thus, by 
\[
\lambda_{\min}(\bm{A}_{\bX_{k+1}}) \; \geq \; \lambda_{\min}(\bm{A}_{\bX_{k}}) - L \|\bz_{\bX_k}\| \; \geq \; \lambda_{\min}(\bm{A}_{\bX_{k}}) - \frac{L\|\bb_{\bX_k}\|}{\lambda_{\min}(\bA_{\bX_k})} \; = \; (1-\delta_k) \lambda_{\min}(\bm{A}_{\bX_k}).
\]
Hence, since $\delta_k \in (0,1/4)$, it follows $\bm{A}_{\bX_{k+1}} \succ \bm{0}$ and $\delta_{k+1}$ is well-defined. By Lemma \ref{lem:grad_bdd} and the fact $M_k \leq 2L$, by the same argument in \cite{Nes06} we have, 
\[
\delta_{k+1} = \frac{L\|\bb_{\bX_{k+1}}\|}{\lambda_{\min}^2(\bm{A}_{\bX_{k+1}})} 
\leq \frac{3L^2 \|\bz_{\bX_k}\|^2}{2\lambda_{\min}^2(\bm{A}_{\bX_{k+1}})} \leq 
\frac{3L^2 \|\bb_{\bX_k}\|^2}{2\lambda_{\min}^4(\bm{A}_{\bX_{k+1}})(1-\delta_k)^2} = \frac{3}{2}\left( \frac{\delta_k}{1-\delta_k}   \right)^2   \leq 
\frac{8}{3} \delta_k^2. 
\]
Therefore, $\delta_{k+1} \leq \frac{1}{4}$ and the remainder of the proof given by Nesterov and Polyak in \cite{Nes06} follow directly provided the smallest eigenvalues of $\bm{A}_{\bX}$ is a continuous function of $\bX$, which is a consequence of Lemma \ref{lem:mat_lipcond}. Therefore, by part 3 of Theorem 3 in \cite{Nes06}, we have the whole sequence generated by Algorithm \ref{alg:CRN} converges quadratically to a local minimum point.}
\end{proof}

%=====================================================================

\section{Appendix D: Alternative Fair TME Formulations}\label{sec:extra_FairTME_models}
Another approach to solving the Fair TME model \eqref{eqn:fair_TME} is to utilize the matrix function $\bG(\bR) = e^{\bR}$ and rewrite \eqref{eqn:fair_TME} as the unconstrained model, 
\begin{align}\label{eqn:fair_TME_uncon_exp}
\min&\;\;\; \mu_1 \sum_{j=1}^{r} \cE_j(e^{\bR}) + \frac{\mu_2}{2}\sum_{i=1}^{r-1}\; \sum_{j=i+1}^{r} \left( \cE_i(e^{\bR}) - \cE_j(e^{\bR}) \right)^2 \\
\text{s.t.}&\;\; \bR \in \mathcal{S}^{p\times p}.\nonumber 
\end{align}

Both reformulations of \eqref{eqn:fair_TME}, \eqref{eqn:fair_TME_uncon_exp} and \eqref{eqn:fair_TME_uncon_sq}, solve the Fair TME model and each has its respective pros and cons. We note \eqref{eqn:fair_TME_uncon_exp} generally converges in fewer iterations due to the bijectivity of the matrix exponential; however, the cost of computing and/or approximating derivatives when solving  \eqref{eqn:fair_TME_uncon_exp} is more expensive and overflow issues are possible. On the other hand, \eqref{eqn:fair_TME_uncon_sq} has cheaper derivative computations and little risk of numerical overflow however more iterations of Algorithm \ref{alg:CRN} are often required to obtain second-order stationary points. Therefore, in terms of numerical implementation we advocate for \eqref{eqn:fair_TME_uncon_sq} and follow this approach in our numerical experiments in Section \ref{sec:experiments}. 

%===============================================================

\end{document}